\newtheorem{thm}{Theorem}
\newtheorem{lem}[thm]{Lemma}
\newdefinition{rmk}{Remark}
\newproof{pf}{Proof}
\newdefinition{example}{Example}
\newdefinition{definition}{Definition}
\newdefinition{proposition}{Proposition}
\newdefinition{corollary}{Corollary}
\newdefinition{conjecture}{Conjecture}
\newdefinition{problem}{Problem}
\begin{document}

\begin{frontmatter}



\title{Double $B$-tensors and quasi-double $B$-tensors}

\author[]{Chaoqian Li}
\author[]{Yaotang Li\corref{cor1}}
\ead{liyaotang@ynu.edu.cn}

\cortext[cor1]{Corresponding author.Yaotang Li, School of
Mathematics and Statistics, Yunnan University, Kunming, Yunnan,
650091, PR China. Email: liyaotang@ynu.edu.cn. This author's work
was supported by the National Natural Science Foundation of P.R.
China (Grant No. 11361074). His work was partially done when he was
visiting The Hong Kong Polytechnic University.}

\cortext[]{Chaoqian Li: School of Mathematics and Statistics, Yunnan
University, Kunming, Yunnan, 650091, PR China. Email:
lichaoqian@ynu.edu.cn. This author's work was supported by the
National Natural Science Foundation of P.R. China (Grant No.
11326242) and Natural Science Foundations of Yunnan Province (Grant
No. 2013FD002).}

\address[]{School of Mathematics and Statistics, Yunnan University,
Kunming, Yunnan,  P. R. China 650091}

\begin{abstract}
In this paper, we propose two new classes of tensors: double
$B$-tensors and quasi-double $B$-tensors, give some properties of
double $B$-tensors and quasi-double $B$-tensors, discuss their
relationships with $B$-tensors and positive definite tensors and
show that even order symmetric double $B$-tensors and even order
symmetric quasi-double $B$-tensors are positive definite. These give
some checkable sufficient conditions for positive definiteness of
tensors.
\end{abstract}

\begin{keyword}
$B$-tensors, Double $B$-tensors, Quasi-double $B$-tensors, Positive definite.

\MSC[2010] 47H15, 47H12, 34B10, 47A52, 47J10, 47H09, 15A48, 47H07. 
\end{keyword}

\end{frontmatter}


\section{Introduction} A real
order $m$ dimension $n$ tensor $\mathcal{A}=(a_{i_1\cdots i_m})$
consists of $n^m$ real entries:
\[a_{i_1\cdots i_m}\in \mathbb{R},\] where $i_j\in N=\{1,2,\ldots,n\}$ for
$j=1,\ldots, m$  \cite{Ch1,Di,He,Li,Qi}. It is obvious that a matrix
is an order 2 tensor. Moreover, a tensor $\mathcal {A}=(a_{i_1\cdots
i_m})$ is called symmetric \cite{Li1,Qi} if
\[a_{i_1\cdots i_m }= a_{\pi(i_1\cdots i_m )},\forall \pi\in
\Pi_m,\]where $\Pi_m$ is the permutation group of $m$ indices. And
an order $m$ dimension $n$ tensor is called the unit tensor denoted
by $\mathcal{I} $ \cite{Ch1,Ya}, if its entries are
$\delta_{i_1\cdots i_m}$ for $i_1,\ldots ,i_m \in N$, where
\[\delta_{i_1\cdots i_m}=\left\{\begin{array}{cc}
   1,   &if~ i_1=\cdots =i_m,  \\
   0,   &otherwise.
\end{array}
\right.\] For a tensor  $\mathcal{A}$ of order $m$ dimension $n$, if
there is a nonzero vector $ x=(x_1,\ldots,x_n)^T\in \mathbb{R}^{n}$
and a number $\lambda \in \mathbb{R}$ such that
\[ \mathcal{A}x^{m-1}=\lambda x^{[m-1]},\]
where \[(\mathcal{A}x^{m-1})_i=\sum\limits_{i_2,\ldots,i_m \in N}
a_{ii_2\cdots i_m}x_{i_2}\cdots x_{i_m}\] and $x^{[m-1]}=(x_1^{m-1},\ldots,x_n^{m-1})^T,$
then $\lambda$ is called an H-eigenvalue of $\mathcal{A}$ and $x$ is called an H-eigenvector of $\mathcal{A}$ \cite{Qi}.

As a natural extension of $B$-matrices \cite{Pe,Pe1}, $B$-tensors is introduced by Song and Qi \cite{So1}.

\begin{definition} \cite{So1} \label{def1.1} Let $\mathcal{B}=(b_{i_1\cdots i_m})$ be a real tensor of order $m$ dimension
$n$. $\mathcal{B}$ is called a $B$-tensor if for all $i\in N$
\[\sum \limits_{i_2,\ldots,i_m \in N} b_{ii_2\cdots i_m} >0\]
and
\[\frac{1}{n^{m-1}} \left(\sum \limits_{i_2,\ldots,i_m \in N} b_{ii_2\cdots i_m} \right) >b_{ij_2\cdots j_m},~for~j_2,\ldots, j_m\in N, \delta_{ij_2\cdots j_m}=0. \]
\end{definition}

By Definition \ref{def1.1}, Song and Qi \cite{So1} gave the
following property of $B$-tensors.

\begin{proposition} \cite[Proposition 3]{So1}\label{pro1.1}
Let $\mathcal{B}=(b_{i_1\cdots i_m})$ be a real tensor of order $m$ dimension $n$.
Then  $\mathcal{B}$ is a $B$-tensor if and only if for each $i\in N$,
\begin{equation} \label{eq3.1}\sum\limits_{i_2,\ldots,i_m\in N} b_{i_1\cdots i_m} > n^{m-1} \beta_i(\mathcal{B}),\end{equation}
where \[\beta_i(\mathcal{B})=\max\limits_{j_2,\ldots,j_m\in N,\atop \delta_{ij_2\ldots j_m}=0} \{0, b_{ij_2\cdots j_m}\}.\]
\end{proposition}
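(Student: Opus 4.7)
The plan is to verify both implications by directly unfolding the definition of $\beta_i(\mathcal{B})$, which is nothing more than the maximum of $0$ and the largest off-diagonal entry in the $i$-th slice. Throughout, write $r_i(\mathcal{B}) := \sum_{i_2,\ldots,i_m \in N} b_{ii_2\cdots i_m}$ for brevity; the proposition is the statement $r_i(\mathcal{B}) > n^{m-1}\beta_i(\mathcal{B})$ for all $i\in N$.

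For the forward direction, I will assume $\mathcal{B}$ is a $B$-tensor in the sense of Definition \ref{def1.1}. The first bullet of the definition gives $r_i(\mathcal{B}) > 0$, hence $r_i(\mathcal{B})/n^{m-1} > 0$. The second bullet gives $r_i(\mathcal{B})/n^{m-1} > b_{ij_2\cdots j_m}$ for every off-diagonal tuple, i.e.\ every $(j_2,\ldots,j_m)$ with $\delta_{ij_2\cdots j_m}=0$. Combining these, $r_i(\mathcal{B})/n^{m-1}$ strictly exceeds each of the numbers $0$ and $b_{ij_2\cdots j_m}$ ($\delta_{ij_2\cdots j_m}=0$), so it strictly exceeds their maximum, which is exactly $\beta_i(\mathcal{B})$. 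Multiplying through by $n^{m-1}$ gives the inequality \eqref{eq3.1}.

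For the reverse direction, assume \eqref{eq3.1} holds for every $i\in N$. Because $\beta_i(\mathcal{B})$ is defined as a max that includes $0$, we have $\beta_i(\mathcal{B}) \geq 0$, so \eqref{eq3.1} forces $r_i(\mathcal{B}) > 0$, giving the first bullet of Definition \ref{def1.1}. For any off-diagonal tuple, $\beta_i(\mathcal{B}) \geq b_{ij_2\cdots j_m}$ by definition of the max, and therefore $r_i(\mathcal{B})/n^{m-1} > \beta_i(\mathcal{B}) \geq b_{ij_2\cdots j_m}$, which is the second bullet of Definition \ref{def1.1}. Thus $\mathcal{B}$ is a $B$-tensor.

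Frankly, there is no real obstacle here: the content of the proposition is the observation that two conditions (positivity of the row sum, and the row sum exceeding every off-diagonal entry after scaling by $1/n^{m-1}$) can be packaged into the single inequality \eqref{eq3.1} by absorbing the positivity requirement into the max with $0$ that appears in $\beta_i(\mathcal{B})$. The only thing to be careful about is to notice that $\beta_i(\mathcal{B})\ge 0$ automatically, so the hypothesis \eqref{eq3.1} really does recover the positivity of the row sum in the backward direction.
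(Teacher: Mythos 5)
Your proof is correct. The paper itself offers no proof of this proposition---it is cited verbatim from Song and Qi \cite{So1}---but the argument you give is the standard one: unfolding the definition of $\beta_i(\mathcal{B})$ as a maximum that simultaneously encodes the positivity condition (via the inclusion of $0$) and the off-diagonal bound, so that the two bullets of Definition \ref{def1.1} collapse into the single inequality \eqref{eq3.1}. The one point worth being explicit about, which you do handle, is that in the reverse direction $\beta_i(\mathcal{B})\ge 0$ is what recovers strict positivity of the row sum; nothing further is needed.
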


It is easy to see that Inequality (\ref{eq3.1}) is equivalent to
\begin{equation} \label{eq3.2} b_{ii\cdots i}-\beta_i(\mathcal{B}) > \Delta_i ({\mathcal{B}}), \end{equation}
where
\begin{equation} \label{eq1.2}\Delta_i ({\mathcal{B}})= \sum\limits_{i_2\ldots i_m \in N, \atop \delta_{ii_2\ldots i_m}=0}
 (\beta_i(\mathcal{B})-b_{ii_2\cdots i_m}). \end{equation}Hence, we by Inequality (\ref{eq3.2}) obtain another property for $B$-tensors.

\begin{proposition} \label{pro1.2}
Let $\mathcal{B}=(b_{i_1\cdots i_m})$ be a real tensor of order $m$ dimension $n$.
Then  $\mathcal{B}$ is a $B$-tensor if and only if for each $i\in N$, Inequality (\ref{eq3.2}) holds.
\end{proposition}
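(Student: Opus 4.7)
The plan is to derive Proposition \ref{pro1.2} directly from Proposition \ref{pro1.1} by verifying that Inequality (\ref{eq3.1}) and Inequality (\ref{eq3.2}) are algebraically equivalent for each fixed index $i\in N$. Since Proposition \ref{pro1.1} already characterizes $B$-tensors via Inequality (\ref{eq3.1}), no direct appeal to Definition \ref{def1.1} is needed; the entire argument reduces to rewriting the two sides of (\ref{eq3.1}).

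First, I would split the left-hand side of (\ref{eq3.1}), namely $\sum_{i_2,\ldots,i_m\in N} b_{ii_2\cdots i_m}$, into the diagonal contribution $b_{ii\cdots i}$ (corresponding to $\delta_{ii_2\cdots i_m}=1$) and the off-diagonal contribution $\sum_{i_2,\ldots,i_m\in N,\ \delta_{ii_2\cdots i_m}=0} b_{ii_2\cdots i_m}$. Next, I would observe that the full index tuple $(i_2,\ldots,i_m)$ ranges over an $n^{m-1}$-element set, so there are exactly $n^{m-1}-1$ off-diagonal tuples. This lets me write the right-hand side as
\[
n^{m-1}\beta_i(\mathcal{B}) \;=\; \beta_i(\mathcal{B}) \;+\; \sum_{i_2,\ldots,i_m\in N,\ \delta_{ii_2\cdots i_m}=0} \beta_i(\mathcal{B}).
\]

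Subtracting the off-diagonal sum and $\beta_i(\mathcal{B})$ from both sides of (\ref{eq3.1}) and regrouping produces exactly
\[
b_{ii\cdots i} - \beta_i(\mathcal{B}) \;>\; \sum_{i_2,\ldots,i_m\in N,\ \delta_{ii_2\cdots i_m}=0} \bigl(\beta_i(\mathcal{B}) - b_{ii_2\cdots i_m}\bigr) \;=\; \Delta_i(\mathcal{B}),
\]
which is precisely (\ref{eq3.2}). Since each step of the manipulation is reversible, the two inequalities are equivalent for every $i\in N$, and Proposition \ref{pro1.1} then yields the claim. There is no substantive obstacle in this proof; the only point requiring any care is the bookkeeping of indices to confirm that the number of off-diagonal tuples is $n^{m-1}-1$, so that the decomposition of $n^{m-1}\beta_i(\mathcal{B})$ matches the off-diagonal decomposition of the left-hand side.
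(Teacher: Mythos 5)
Your proof is correct and follows essentially the same route as the paper: the paper simply observes that (\ref{eq3.1}) is equivalent to (\ref{eq3.2}) by regrouping the sum, and then invokes Proposition \ref{pro1.1}, which is exactly what you do, just with the bookkeeping (splitting off the diagonal term and distributing $n^{m-1}\beta_i(\mathcal{B})$ over the $n^{m-1}-1$ off-diagonal tuples plus one) spelled out explicitly.
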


$B$-tensors are linked with positive definite tensors and $M$-tensors, which are useful in automatical control, magnetic resonance imaging and spectral hypergraph theory \cite{Bo,Ch,Di,Ha,He,Hu,Hu1,Hu2,Hu3,Hu4,Qi3,Qi4,Qi5,Qi6,So1,Wa,Wa1,Yu}.

\begin{definition} \label{def1.2}\cite{Qi7,So1} Let $\mathcal{A}=(a_{i_1\cdots i_m})$ be a real tensor of order $m$ dimension $n$. $\mathcal{A}$ is called positive definite if for any nonzero vector $x$ in $\mathbb{R}^{n}$,
\[\mathcal{A}x^{m} >0,\]
and positive semi-definite if for any vector $x$ in $\mathbb{R}^{n}$,
\[ \mathcal{A}x^{m} \geq 0,\]
where $\mathcal{A}x^{m}=\sum\limits_{i_1,i_2,\ldots,i_m \in N}
a_{i_1i_2\cdots i_m}x_{i_1}\cdots x_{i_m}$.
\end{definition}

One of the most important properties of $B$-tensors is listed as follows.

\begin{thm} \label{th1.3}\cite{Qi7} Let $\mathcal{B}=(a_{i_1\cdots i_m})$ be a real tensor of order $m$ dimension $n$. If $\mathcal{B}$ is an even order symmetric $B$-tensor, then $\mathcal{B}$ is positive definite.
\end{thm}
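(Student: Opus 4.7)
The plan is to use the characterization in Proposition~\ref{pro1.2} to split $\mathcal{B}$ into a strong $M$-tensor plus a nonnegatively contributing remainder, and to verify positive definiteness of each piece separately. For each $i\in N$ set $\beta_i:=\beta_i(\mathcal{B})\ge 0$, and write $\mathcal{B}=\mathcal{C}+\mathcal{R}$, where every entry of the $i$-th slice of $\mathcal{R}$ equals $\beta_i$ and $\mathcal{C}:=\mathcal{B}-\mathcal{R}$. By construction the off-diagonal entries of $\mathcal{C}$ are nonpositive, while Proposition~\ref{pro1.2} yields
\[
c_{ii\cdots i}=b_{ii\cdots i}-\beta_i \;>\; \Delta_i(\mathcal{B})\;=\;-\sum_{\delta_{ii_2\cdots i_m}=0} c_{ii_2\cdots i_m},
\]
so $\mathcal{C}$ is a strictly row-diagonally-dominant $Z$-tensor with positive diagonal, i.e.\ a nonsingular $M$-tensor.

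Next, I would apply a Gershgorin-type theorem for tensor H-eigenvalues to $\mathcal{C}$: strict row dominance together with positivity of the diagonal confines every H-eigenvalue of $\mathcal{C}$ to a disk centered at a positive number whose radius is strictly smaller than the center, so every H-eigenvalue of $\mathcal{C}$ is positive. Since $\mathcal{C}x^m$ depends only on the symmetric part of $\mathcal{C}$ (which inherits the same row-sum inequality), and since a symmetric tensor of even order is positive definite iff its smallest H-eigenvalue is positive, this gives $\mathcal{C}x^m>0$ for every nonzero $x$.

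The remainder contributes $\mathcal{R}x^m=\bigl(\sum_i\beta_i x_i\bigr)\bigl(\sum_j x_j\bigr)^{m-1}$, which is \emph{not} sign-definite on its own. To finish, I would relabel so that $\beta_{k_1}\le\cdots\le\beta_{k_n}$ (setting $\beta_{k_0}:=0$), telescope $\beta_{k_j}=\sum_{s\le j}(\beta_{k_s}-\beta_{k_{s-1}})$, and rewrite $\mathcal{R}$ as a nonnegative combination of pieces supported on the ``upper'' index sets $\{k_s,\ldots,k_n\}$. Pairing each such piece with the matching principal sub-tensor block of $\mathcal{C}$ (which itself inherits strict row dominance) allows the combined form to be split into an even power of a partial sum of coordinates plus a strictly smaller positive-definite subproblem, which one then finishes by induction on $n$.

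The main obstacle is precisely this reassembly: the raw splitting $\mathcal{B}=\mathcal{C}+\mathcal{R}$ alone does not yield nonnegativity of $\mathcal{R}x^m$, so one must simultaneously regroup rows according to their $\beta$-levels and check that the strict row-sum inequality of Proposition~\ref{pro1.2} survives restriction to each principal sub-tensor appearing in the induction. Once this bookkeeping is handled, everything else reduces to routine algebraic manipulation of even powers of linear forms.
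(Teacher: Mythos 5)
There is a genuine gap: the decomposition you start from cannot be repaired by the cleanup plan you sketch.

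Writing $\mathcal{B}=\mathcal{C}+\mathcal{R}$ with $r_{ii_2\cdots i_m}=\beta_i$ for all $i_2,\ldots,i_m$ subtracts a different constant from each row, so $\mathcal{R}$ (and hence $\mathcal{C}$) is no longer symmetric unless all $\beta_i$ coincide. Your attempt to patch this — ``$\mathcal{C}x^m$ depends only on the symmetric part of $\mathcal{C}$, which inherits the same row-sum inequality'' — is the unjustified step: symmetrizing a tensor averages each off-diagonal value over all permutations that move the leading index out of position, and there is no reason the resulting $i$-th row sum is bounded by the original one. Concretely, if $\beta_i$ is small and $\beta_j$ is large, entries of the form $c_{i j\cdots j}=b_{ij\cdots j}-\beta_i$ get averaged with $c_{j\,i\cdots}=b_{ji\cdots}-\beta_j$, and the surplus that row $i$ enjoyed in $\mathcal{C}$ can be spent. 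Moreover, positive H-eigenvalues of a nonsymmetric tensor do not imply $\mathcal{C}x^m>0$; the H-eigenvalue characterization of positive definiteness requires symmetry, which is exactly what the decomposition destroyed.

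The telescoping step has the same structural defect. After relabeling, each level-$s$ piece of $\mathcal{R}$ is the tensor whose slice $i$ is all 1's for $i\in\{k_s,\ldots,k_n\}$ and all 0's otherwise. For $s>1$ this tensor is not symmetric (entry $(k_1,k_s,\ldots)$ is $0$ while $(k_s,k_1,\ldots)$ is $1$) and $\mathcal{P}_s x^m = \bigl(\sum_{i\ge s}x_{k_i}\bigr)\bigl(\sum_j x_j\bigr)^{m-1}$, which is not an $m$-th power and is not sign-definite, so the induction never gets off the ground. The fix — and this is the key idea of the Qi--Song argument that Theorem~\ref{th3.2} in the paper reproduces — is to subtract at each step a constant $h_k$ times the \emph{partially all one tensor} $\varepsilon^{\hat J_k}$, whose entry is $1$ only when \emph{every} index lies in the active set $\hat J_k$ (not just the leading one). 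That makes each subtracted piece symmetric, makes each contribution $\varepsilon^{\hat J_k}x^m=\bigl(\sum_{i\in\hat J_k}x_i\bigr)^m\ge 0$ for even $m$, and keeps the residual a symmetric strictly diagonally dominant $Z$-tensor, to which Lemma~\ref{lem30.1}-type results apply directly. Your plan has the right flavor — $Z$-tensor part plus nonnegative remainder — but the specific $\mathcal{R}$ chosen and the row-slice telescoping do not produce the required positive semi-definite pieces, and no amount of induction on $n$ fixes the nonsymmetric, non-sign-definite remainder as written.
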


The definition of $DB$-matrix is a generalization of the $B$-matrix
\cite{Pe1}. Here we call a matrix $A=(a_{ij})\in \mathbb{R}^{n\times
n}$ a $DB$-matrix if for any $i,j \in N$, $i\neq j$,
\[ \left( a_{ii}-\beta_i(A)\right)\left( a_{jj}-\beta_j(A)\right) > \sum\limits_{k\neq i} \left(\beta_i(A) -a_{ik}\right)\sum\limits_{k\neq j} \left(\beta_j(A) -a_{jk}\right).\]
A natural question is that can we extend the class of $DB$-matrices
to tensors with order $m\geq 3$ such that it has the property like
that in Theorem \ref{th1.3}, that is, whether or not an even order
symmetric  tensor $\mathcal{A}=(a_{i_1\ldots i_m})$ satisfying
\begin{equation}\label{eq03.1}
(a_{i\cdots i}-\beta_i(\mathcal{A}))(a_{j\cdots j}-\beta_j(\mathcal{A}))
>\Delta_i ({\mathcal{A}})\Delta_j ({\mathcal{A}}),
\end{equation}
is positive definite? We see an example firstly for discussing the
question.

Consider the symmetric tensor $\mathcal {A}=(a_{ijkl})$  of order 4
dimension 2 defined as follows:
\[a_{1111}=a_{2222}=2, a_{1222}= a_{2122}= a_{2212}= a_{2221}=-1,\]
and other $a_{ijkl}=0$.
By calculation, we have $ \beta_1(\mathcal {A})=\beta_2(\mathcal {A})=0$, and
\[a_{1111}a_{2222}=4>3=\sum\limits_{\delta_{1jkl}= 0} (-a_{1jkl}) \sum\limits_{\delta_{2jkl}= 0} (-a_{2jkl}),\]
which satisfies Inequality (\ref{eq03.1}). However, $\mathcal{A}$ is not positive definite. In fact, for any entrywise positive vector
$x=(x_1,x_2)^T$. If $\mathcal {A} x^{4}>0$, that is,
\[\left\{\begin{array}{ccc}
   a_{1111}x_1^4 +\sum\limits_{j,k,l\in \{1,2\},\atop
\delta_{1jkl}=0} a_{1jkl}x_1x_{j}
x_{k}x_{l} &>&0,    \\
  a_{2222}x_2^4 +\sum\limits_{j,k,l\in \{1,2\},\atop
\delta_{2jkl}=0} a_{2jkl}x_2x_{j} x_{k}x_{l} &>&0,
\end{array}
\right.\] equivalently,
\[\left\{\begin{array}{ccc}
   2x_1^4 -x_1x_2^3 &>&0,   \\
  2x_2^4 - 3x_1x_2^3&>&0,
\end{array}
\right.\] then
\[\left\{\begin{array}{ccc}
   2x_1^3  &>&x_2^3,   \\
  x_2  &>&\frac{3}{2}x_1,
\end{array}
\right.\] which implies \[2x_1^3  >x_2^3>\frac{27}{8}x_1^3.\] This is
a contradiction. Hence, for any vector
$x=(x_1,x_2)^T>0 $, $\mathcal {A} x^{4}>0$ dosen't hold. Hence
$\mathcal{A}$ is not positive definite by Definition \ref{def1.2}.

The example shows that Inequality (\ref{eq03.1}) doesn't guarantee
the positive definiteness of tensor $\mathcal{A}$. In this paper, we
introduce two new classes of tensors by adding other conditions:
double $B$-tensors and quasi-double $B$-tensors as generalizations
of $B$-tensors, and prove that an even order symmetric
(quasi-)double $B$-tensor is  positive definite.

\section{Double $B$-tensor and quasi-double $B$-tensor}
Now, we present the definitions of double $B$-tensors and
quasi-double $B$-tensors.

\begin{definition}\label{def03.0}
Let $\mathcal{B}=(b_{i_1\cdots i_m})$ be a real tensor of order $m$ dimension $n$ with $b_{i\cdots i} > \beta_i(\mathcal{B})$ for all $i\in N$.
$\mathcal{B}$ is called a double $B$-tensor if $\mathcal{B}$ satisfies

(I) for any $i\in N$,
\[ b_{i\cdots i}-\beta_i(\mathcal{B}) \geq \Delta_i ({\mathcal{B}}),\]
where $\Delta_i ({\mathcal{B}})$ is defined as (\ref{eq1.2});

(II) for all $i,j\in N$, $i\neq j$, Inequality (\ref{eq03.1}) holds.
\end{definition}

\begin{definition}\label{def03.1}
Let $\mathcal{B}=(b_{i_1\cdots i_m})$ be a real tensor of order $m$ dimension $n$ with $b_{i\cdots i} > \beta_i(\mathcal{B})$ for all $i\in N$.
$\mathcal{B}$ is called a quasi-double $B$-tensor if for all $i,j\in N$, $i\neq j$,
\begin{equation} \label{eq3.3}
(b_{i\cdots i}-\beta_i(\mathcal{B})) \left (b_{j\cdots j}-\beta_j(\mathcal{B})- \Delta_j^i ({\mathcal{B}}) \right)
>\left( \beta_j(\mathcal{B})-b_{ji\cdots i}\right)\Delta_i ({\mathcal{B}}),
\end{equation}
where
\[\Delta_j^i ({\mathcal{B}}) = \Delta_j ({\mathcal{B}})- \left( \beta_j(\mathcal{B})-b_{ji\cdots i}\right)=\sum\limits_{\delta_{jj_2\ldots j_m}=0, \atop \delta_{ij_2\ldots j_m}=0} (\beta_j(\mathcal{B})-b_{jj_2\cdots j_m}).\]
\end{definition}

We now give some properties of double $B$-tensors and quasi-double $B$-tensors.

\begin{proposition} \label{pro2.1}
Let $\mathcal{B}=(b_{i_1\cdots i_m})$ be a real tensor of order $m$ dimension $n$.
(I) If $\mathcal{B}$ is a double $B$-tensor, then there is at most one $i\in N$ such that
\[ b_{i\cdots i}-\beta_i(\mathcal{B}) = \Delta_i ({\mathcal{B}}).\]
(II) If $\mathcal{B}$ is a quasi-double double $B$-tensor,
  then there is at most one $i\in N$ such that
\[ b_{i\cdots i}-\beta_i(\mathcal{B}) \leq \Delta_i ({\mathcal{B}}).\]
\end{proposition}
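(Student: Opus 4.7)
The plan is to prove each of the two parts by contradiction, exploiting the strictness of the defining inequalities. Part (I) is essentially immediate; part (II) requires a small case split because the defining inequality (\ref{eq3.3}) is not symmetric in $i$ and $j$.

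For part (I), I would suppose that there exist two distinct indices $i,j\in N$ with $b_{i\cdots i}-\beta_i(\mathcal{B})=\Delta_i(\mathcal{B})$ and $b_{j\cdots j}-\beta_j(\mathcal{B})=\Delta_j(\mathcal{B})$. All four quantities in these equalities are nonnegative (the left sides by the running hypothesis $b_{i\cdots i}>\beta_i(\mathcal{B})$ in Definition \ref{def03.0}, the right sides because each summand $\beta_k(\mathcal{B})-b_{kj_2\cdots j_m}$ in $\Delta_k(\mathcal{B})$ is $\geq 0$ by the very definition of $\beta_k(\mathcal{B})$). Multiplying the two equalities then gives $(b_{i\cdots i}-\beta_i(\mathcal{B}))(b_{j\cdots j}-\beta_j(\mathcal{B}))=\Delta_i(\mathcal{B})\Delta_j(\mathcal{B})$, which directly contradicts the strict inequality (\ref{eq03.1}) required by condition (II) of Definition \ref{def03.0}.

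For part (II), I would again argue by contradiction: assume that there exist distinct $i,j\in N$ with $b_{i\cdots i}-\beta_i(\mathcal{B})\leq\Delta_i(\mathcal{B})$ and $b_{j\cdots j}-\beta_j(\mathcal{B})\leq\Delta_j(\mathcal{B})$. The key algebraic input, read off directly from the definitions, is the identity
\[
\Delta_j(\mathcal{B})=\Delta_j^{\,i}(\mathcal{B})+\bigl(\beta_j(\mathcal{B})-b_{ji\cdots i}\bigr),
\]
together with the sign fact $\beta_j(\mathcal{B})-b_{ji\cdots i}\geq 0$ (since $\delta_{ji\cdots i}=0$, so this term is one of those over which the maximum defining $\beta_j(\mathcal{B})$ is taken). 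The assumption on $j$ therefore rewrites as
\[
b_{j\cdots j}-\beta_j(\mathcal{B})-\Delta_j^{\,i}(\mathcal{B})\leq \beta_j(\mathcal{B})-b_{ji\cdots i}.
\]
Multiplying this inequality by the positive number $b_{i\cdots i}-\beta_i(\mathcal{B})$ and combining with (\ref{eq3.3}) yields
\[
(\beta_j(\mathcal{B})-b_{ji\cdots i})\,\Delta_i(\mathcal{B})<(b_{i\cdots i}-\beta_i(\mathcal{B}))(\beta_j(\mathcal{B})-b_{ji\cdots i}).
\]

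Now I split into two cases. If $\beta_j(\mathcal{B})-b_{ji\cdots i}>0$, cancelling this common factor gives $\Delta_i(\mathcal{B})<b_{i\cdots i}-\beta_i(\mathcal{B})$, contradicting the assumption on $i$. If $\beta_j(\mathcal{B})-b_{ji\cdots i}=0$, then $\Delta_j^{\,i}(\mathcal{B})=\Delta_j(\mathcal{B})$, so the assumption on $j$ forces $b_{j\cdots j}-\beta_j(\mathcal{B})-\Delta_j^{\,i}(\mathcal{B})\leq 0$; the left side of (\ref{eq3.3}) is then nonpositive while its right side is $0$, contradicting the strict inequality. The main subtlety, and the only real obstacle, is handling this degenerate second case correctly: when $\beta_j(\mathcal{B})=b_{ji\cdots i}$ the right side of (\ref{eq3.3}) collapses to zero, and one must read the contradiction off the sign of the left side rather than by cancellation. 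Everything else is bookkeeping.
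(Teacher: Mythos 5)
Your argument is correct and, at heart, the same contradiction argument the paper uses: assume two bad indices, exploit the nonnegativity of $b_{k\cdots k}-\beta_k(\mathcal{B})$ and of every summand of $\Delta_k(\mathcal{B})$, and multiply to violate the strict inequality in the definition. One small observation: in part (II) your case split on $\beta_j(\mathcal{B})-b_{ji\cdots i}$ is actually unnecessary, since the chain you already derived, namely $\bigl(\beta_j(\mathcal{B})-b_{ji\cdots i}\bigr)\Delta_i(\mathcal{B})<\bigl(b_{i\cdots i}-\beta_i(\mathcal{B})\bigr)\bigl(\beta_j(\mathcal{B})-b_{ji\cdots i}\bigr)$, already reads $0<0$ when $\beta_j(\mathcal{B})-b_{ji\cdots i}=0$; the paper instead splits on the sign of $b_{j\cdots j}-\beta_j(\mathcal{B})-\Delta_j^i(\mathcal{B})$ and shows the negation of (\ref{eq3.3}) directly, which is a marginally more streamlined packaging of the same idea.
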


\begin{proof} We only prove that (II) holds, and (I) is proved similarly.  Suppose that there are $i_0$ and $j_0$ such that
\[ b_{i_0\cdots i_0}-\beta_{i_0}(\mathcal{B}) \leq \Delta_{i_0} ({\mathcal{B}}),\]
 and
\[ b_{j_0\cdots j_0}-\beta_{j_0}(\mathcal{B}) \leq \Delta_{j_0} ({\mathcal{B}}),\] equivalently
\[ b_{j_0\cdots j_0}-\beta_{j_0}(\mathcal{B}) -\Delta_{j_0}^{i_0} ({\mathcal{B}}) \leq \beta_{j_0}(\mathcal{B})-b_{j_0i_0\cdots i_0},\]  If $b_{j_0\cdots j_0}-\beta_{j_0}(\mathcal{B}) -\Delta_{j_0}^{i_0} ({\mathcal{B}})<0$, then
\begin{equation}\label{eq2.1} (b_{i_0\cdots i_0}-\beta_{i_0}(\mathcal{B})) \left(b_{j_0\cdots j_0}-\beta_{j_0}(\mathcal{B}) -\Delta_{j_0}^{i_0} ({\mathcal{B}}) \right)\leq (\beta_{j_0}(\mathcal{B})-b_{j_0i_0\cdots i_0})\Delta_{i_0} ({\mathcal{B}}) ,\end{equation}
otherwise, $b_{j_0\cdots j_0}-\beta_{j_0}(\mathcal{B}) -\Delta_{j_0}^{i_0} ({\mathcal{B}})\geq 0$, which also leads to  Inequality (\ref{eq2.1}). This contradicts to the definition of quasi-double double $B$-tensors. The conclusion follows.
\end{proof}

The relationships of $B$-tensors, double $B$-tensors and quasi-double $B$-tensors are given as follows.

\begin{proposition} \label{pro2.2}
Let $\mathcal{B}=(b_{i_1\cdots i_m})$ be a tensor of order $m$ dimension $n$.
If $\mathcal{B}$ is a $B$-tensor, then $\mathcal{B}$ is a double $B$-tensor and a quasi-double $B$-tensor. Furthermore, if $\mathcal{B}$ is a double $B$-tensor,
 then $\mathcal{B}$ is  a quasi-double $B$-tensor.
\end{proposition}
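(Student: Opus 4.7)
The plan is to establish the chain of inclusions by translating each of the three sets of defining inequalities into the next, relying only on Proposition \ref{pro1.2} and on the identity $\Delta_j^i(\mathcal{B}) = \Delta_j(\mathcal{B}) - (\beta_j(\mathcal{B}) - b_{ji\cdots i})$ built into Definition \ref{def03.1}.

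For the implication \emph{$B$-tensor $\Rightarrow$ double $B$-tensor}, I would invoke Proposition \ref{pro1.2}, which gives $b_{i\cdots i} - \beta_i(\mathcal{B}) > \Delta_i(\mathcal{B})$ for every $i$. Since each $\Delta_i(\mathcal{B})\geq 0$, this also yields $b_{i\cdots i} > \beta_i(\mathcal{B})$, verifying the standing hypothesis of Definition \ref{def03.0} as well as condition (I) in its weak form. For condition (II), multiplying the two strict inequalities $b_{i\cdots i}-\beta_i > \Delta_i \geq 0$ and $b_{j\cdots j}-\beta_j > \Delta_j \geq 0$ yields $(b_{i\cdots i}-\beta_i)(b_{j\cdots j}-\beta_j) > \Delta_i\Delta_j$ for every $i\neq j$.

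For \emph{double $B$-tensor $\Rightarrow$ quasi-double $B$-tensor}, I would substitute the identity for $\Delta_j^i$ into the left-hand side of (\ref{eq3.3}) and rearrange, reducing the desired inequality to
\[
(b_{i\cdots i}-\beta_i)(b_{j\cdots j}-\beta_j-\Delta_j) + (\beta_j-b_{ji\cdots i})\bigl[(b_{i\cdots i}-\beta_i)-\Delta_i\bigr] > 0.
\]
Both summands are $\geq 0$: the first by condition (I) together with $b_{i\cdots i} > \beta_i$; the second because $\beta_j \geq b_{ji\cdots i}$ (as $b_{ji\cdots i}$ is an off-diagonal entry of row $j$ with $\delta_{ji\cdots i} = 0$) and again by (I). To promote this to strict inequality, I would split into cases according to which of the three factors $b_{j\cdots j}-\beta_j-\Delta_j$, $\beta_j-b_{ji\cdots i}$, $(b_{i\cdots i}-\beta_i)-\Delta_i$ are positive; in the edge case where both summands threaten to vanish simultaneously, I would use condition (II) of Definition \ref{def03.0} to force strict inequality in (I) at the relevant index, and invoke Proposition \ref{pro2.1}(I) to rule out the remaining sub-case.

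Finally, \emph{$B$-tensor $\Rightarrow$ quasi-double $B$-tensor} follows by composing the two preceding implications; alternatively, one obtains it in one line from $b_{j\cdots j}-\beta_j > \Delta_j = \Delta_j^i + (\beta_j - b_{ji\cdots i})$ and $b_{i\cdots i}-\beta_i > \Delta_i$, splitting on whether $\beta_j - b_{ji\cdots i}$ is zero or positive. The main obstacle is the middle implication: the hypotheses of Definition \ref{def03.0} provide only a weak inequality in (I), so obtaining the strict inequality in (\ref{eq3.3}) forces a careful exploitation of the strict inequality (II) combined with the uniqueness refinement of Proposition \ref{pro2.1}.
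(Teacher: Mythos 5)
Your handling of the two outer implications is correct and follows the paper's route. \emph{$B$-tensor $\Rightarrow$ double $B$-tensor} is the product of the two strict inequalities from Proposition \ref{pro1.2} (with the case $\Delta_i=0$ handled separately), and the direct argument for \emph{$B$-tensor $\Rightarrow$ quasi-double $B$-tensor} is fine. Your algebraic rewriting of (\ref{eq3.3}) as
\[
(b_{i\cdots i}-\beta_i)(b_{j\cdots j}-\beta_j-\Delta_j) + (\beta_j-b_{ji\cdots i})\bigl[(b_{i\cdots i}-\beta_i)-\Delta_i\bigr] > 0
\]
is also correct, and in fact more transparent than the paper's verification, which reasons about which index achieves equality in (I) and checks (\ref{eq3.3}) pairwise.

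The problem is the middle implication: the claimed resolution of the edge case does not close. Write $P=b_{i\cdots i}-\beta_i>0$, $Q=b_{j\cdots j}-\beta_j-\Delta_j\ge 0$, $R=\beta_j-b_{ji\cdots i}\ge 0$, $S=P-\Delta_i\ge 0$; you need $PQ+RS>0$. If $Q=0$ then $\Delta_j=b_{j\cdots j}-\beta_j>0$, so condition (II) divided by $\Delta_j$ does give $S>0$, and Proposition \ref{pro2.1}(I) tells you $j$ is the unique such index — but neither fact controls $R$. If in addition $b_{ji\cdots i}=\beta_j$ (the entry $b_{ji\cdots i}$ attains the row maximum of row $j$, or both equal $0$), then $R=0$, hence $PQ+RS=0$ and (\ref{eq3.3}) fails. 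The paper's own proof has the identical gap: it treats only the case where the index of equality sits in the $i$-slot of (\ref{eq3.3}); in the $j$-slot one is reduced to $(b_{i\cdots i}-\beta_i)R>R\,\Delta_i$, which is vacuous when $R=0$. Indeed the implication appears false as stated: with $m=3$, $n=2$, $b_{111}=b_{222}=2$, $b_{212}=b_{221}=-1$, all other entries $0$, one has $\beta_1=\beta_2=0$, $\Delta_1=0$, $\Delta_2=2$, so $b_{i\cdots i}>\beta_i$, (I) holds (with equality at $i=2$), and (II) holds ($4>0$); yet $\Delta_2^1=2$ and (\ref{eq3.3}) at $(i,j)=(1,2)$ reads $2\cdot 0>0\cdot 0$, which is false. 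So a double $B$-tensor need not be a quasi-double $B$-tensor unless some further hypothesis (e.g.\ $\beta_j>b_{ji\cdots i}$ for all $i\neq j$, or $\Delta_i>0$ for all $i$) is imposed. You were right to sense that the weak inequality in (I) was the obstacle, but the tools you name cannot overcome it.
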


\begin{proof} If $\mathcal{B}$ is a $B$-tensor, then by Proposition \ref{pro1.2} for any $i\in N$,
\[ b_{i\cdots i}-\beta_i(\mathcal{B}) > \Delta_i ({\mathcal{B}}),\]
that is,
\[b_{i\cdots i}-\beta_i(\mathcal{B})- \Delta_i^k ({\mathcal{B}})
>\beta_i(\mathcal{B})-b_{ik\cdots k}, ~for~ k\neq i.\]
Obviously, Inequality (\ref{eq03.1}) holds for any $i\neq j$. This implies that $\mathcal{B}$ is a double $B$-tensor. On the other hand, note that
for $i,j\in N$, $j\neq i$,
\[ b_{i\cdots i}-\beta_i(\mathcal{B}) > \Delta_i ({\mathcal{B}}),\]
and
\[b_{j\cdots j}-\beta_j(\mathcal{B})- \Delta_j^i ({\mathcal{B}})
>\beta_j(\mathcal{B})-b_{ji\cdots i}.\]
It is easy to see that Inequality (\ref{eq3.3}) holds, i.e.,  $\mathcal{B}$ is a quasi-double $B$-tensor by  Definition \ref{def03.1}.

Furthermore, if $\mathcal{B}$ is a double $B$-tensor, then there is at most one $k\in N$ such that $ b_{i\cdots i}-\beta_i(\mathcal{B}) = \Delta_i ({\mathcal{B}})$. If there is not $k\in N$ such that $b_{k\cdots k}-\beta_k(\mathcal{B}) = \Delta_k ({\mathcal{B}})$, then $\mathcal{B}$ is a $B$-tensor, consequently, $\mathcal{B}$ is a quasi-double $B$-tensor. If there is only one $k\in N$ such that $b_{k\cdots k}-\beta_k(\mathcal{B}) = \Delta_k ({\mathcal{B}})$, then we have for any $j\neq k$ \[b_{j\cdots j}-\beta_j(\mathcal{B}) > \Delta_j ({\mathcal{B}}).\] Note that for any $j \in N $, $j\neq k$,
\[b_{k\cdots k}-\beta_k(\mathcal{B}) = \Delta_k ({\mathcal{B}}), ~b_{k\cdots k}-\beta_k(\mathcal{B})- \Delta_k^j ({\mathcal{B}})=\beta_k(\mathcal{B})-b_{kj\cdots j},\]
and
\[ b_{j\cdots j}-\beta_j(\mathcal{B}) > \Delta_j ({\mathcal{B}}),~ b_{j\cdots j}-\beta_j(\mathcal{B})- \Delta_j^k ({\mathcal{B}})=\beta_j(\mathcal{B})-b_{jk\cdots k}.\]
This implies that Inequality (\ref{eq3.3}) holds, i.e., $\mathcal{B}$ is a quasi-double $B$-tensor.
\end{proof}

\begin{rmk}  (I) It is not difficult to see that the class of $B$-tensors is a proper subclass of double $B$-tensors and quasi-double  $B$-tensors, that is,
 \[\{B-tensors\} \subset \{double~ B-tensors\} \]
 and
 \[\{B-tensors\} \subset \{quasi-double~ B-tensors\}.\]

 (II) The class of double $B$-tensors is a proper subclass of quasi-double $B$-tensors. Consider the  tensor $\mathcal {A}=(a_{ijk})$
of order 3 dimension 2 defined as follows:
\[\mathcal {A}=[A(1,:,:),A(2,:,:)],\]
where
\[A(1,:,:)=\left(\begin{array}{cccc}
   2   &0   \\
   0   &-0.3
\end{array}
\right),~ A(2,:,:)=\left(\begin{array}{cccc}
   -1   &-0.3         \\
   -1.5   &2
\end{array}
\right).\]
By calculation,
 $\beta_1(\mathcal {A})=\beta_2(\mathcal {A})=0$, and
$a_{222}=2<2.8= \sum\limits_{\delta_{2jk}= 0} (-a_{2jk}).$
Hence $A$ is not a double $B$ tensor. Since
\[a_{111} \left (a_{222}- \Delta_2^1 ({\mathcal{A}}) \right)=0.4
>0.3 =\left(-b_{211}\right)\Delta_1 ({\mathcal{B}})\]
and
\[ a_{222}\left(a_{111}-\Delta_1^2(\mathcal {A})\right)=4>0.84=(-a_{122})\Delta_2(\mathcal {A}),\]
then $\mathcal {A}$ is a quasi-double $B$-tensor. Hence, the class of  double $B$-tensors is a proper subclass of quasi-double $B$-tensors. By (I), we have
 \[\{B-tensors\} \subset \{double~ B-tensors\} \subset  \{quasi-double~ B-tensors\}.\]
\end{rmk}

As is well known,  a $B$-matrix is a $P$-matrix \cite{Pe1,Pe1}. This
is not true for higher order tensors, that is, a $B$-tensor may not
be a $P$-tensor. In \cite{So1}, Song and Qi proved that a symmetric
tensor is a $P$-tensor if and only it is positive definite.
\begin{definition} \cite{So1}
A real tensor $\mathcal{A}=(a_{i_1\cdots i_m})$ of order $m$ dimension $n$ is called a $P$-tensor if for any nonzero $x$ in $\mathbb{R}^{n}$,
\[\max\limits_{i\in N} x_i (\mathcal{A}x^{m-1})_i >0.\]
\end{definition}

It is  pointed out in \cite{Qi7,So1} that an odd order $B$-tensor
may not be a $P$-tensor, Furthermore, Yuna and You \cite{Yu} gave an
example to show that an even order nonsymmetric $B$-tensor may not
be a $P$-tensor. Hence, by Proposition \ref{pro2.2}, we conclude
that an odd order (quasi-)double $B$-tensor may not be a $P$-tensor,
and an even order nonsymmetric (quasi-)double $B$-tensor may not be
a $P$-tensor. On the other hand, it is pointed out in \cite{Qi7}
that an even order symmetric $B$-tensor is a $P$-tensor. A natural
question is that whether or not an even order symmetric
(quasi-)double $B$-tensor is a $P$-tensor? In the following section,
we will  answer this question by discussing the positive
definiteness of (quasi-)double $B$-tensors.

\section{Positive definiteness}
Now, we discuss the positive definiteness of (quasi-)double
B-tensors. Before that some definitions are given.

\begin{definition} \cite{Di,He,Zh} Let $\mathcal{A}=(a_{i_1\cdots i_m})$ be a real tensor of order $m$ dimension $n$. $\mathcal{A}$ is called a $Z$-tensor if all of the off-diagonal entries of $\mathcal{A}$ are non-positive;

\end{definition}

\begin{definition} \label{def2.0} Let $\mathcal{A}=(a_{i_1\cdots i_m})$ be a tensor of order $m$ dimension $n\geq 2$.  $\mathcal{A}$ is called a doubly strictly diagonally dominant tensor (DSDD) if

(I) when $m=2$, $\mathcal{A}$ satisfies
\begin{equation} \label{eq02.1} |a_{i\cdots i}||a_{j\cdots j}|>
r_i(\mathcal{A})r_j(\mathcal{A}), ~for ~any ~i,j\in N, ~i\neq j, \end{equation}

(II) when  $m>2$, $\mathcal{A}$ satisfies  $|a_{i\cdots i}|\geq r_i(\mathcal{A})$ for any $i\in N$ and Inequality (\ref{eq02.1}) holds.
\end{definition}

Note here that when $m>2$, the similar condition that $|a_{i\cdots
i}|\geq r_i(\mathcal{A})$ for any $i\in N$, is necessary for DSDD
tensors to have the properties of  doubly strictly diagonally
dominant matrices; for details, see \cite{Li,Li1}.

\begin{definition} \label{def2.1} Let $\mathcal{A}= (a_{i_1\cdots i_m})$ be a tensor of order $m$ dimension $n\geq 2$.  $\mathcal{A}$ is called a quasi-doubly strictly diagonally dominant tensor (Q-DSDD) if for $i,j\in N$, $j\neq i$,
\begin{equation}\label{equ3.8}|a_{i\cdots
i}|\left(|a_{j\cdots j}|-r_j^i(\mathcal{A})\right)>
r_i(\mathcal{A})|a_{ji\cdots i}|,\end{equation}
where \[r_j^i(\mathcal
{A})=\sum\limits_{j_2,\ldots,j_m\in N,\atop \delta_{jj_2\ldots
j_m}=0} |a_{jj_2\cdots j_m}|=\sum\limits_{\delta_{jj_2\ldots j_m}=0,\atop
\delta_{ij_2\ldots j_m}=0} |a_{jj_2\cdots j_m}|-|a_{ji\cdots
i}|=r_j(\mathcal {A})-|a_{ji\cdots i}|.\]
\end{definition}

The relationships between (Q-)DSDD tensors and (quasi-)double $B$-tensors are established as follows.

\begin{proposition} \label{pro3.3}
Let $\mathcal{B}=(b_{i_1\cdots i_m})$ be a $Z$-tensor of order $m$ dimension $n$.
Then

(I) $\mathcal{B}$ is a double $B$-tensor if and only if $\mathcal{B}$ is a DSDD
tensor.

(II) $\mathcal{B}$ is a quasi-double $B$-tensor if and only if $\mathcal{B}$ is a Q-DSDD
tensor.
\end{proposition}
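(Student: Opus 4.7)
The plan is to reduce both equivalences to a direct definition-chase, using the $Z$-tensor hypothesis to translate the $B$-tensor quantities $\beta_i(\mathcal{B})$, $\Delta_i(\mathcal{B})$ and $\Delta_j^i(\mathcal{B})$ into the row-magnitudes $r_i(\mathcal{B})$ and $r_j^i(\mathcal{B})$ appearing in the (Q-)DSDD definitions. First I would observe that since every off-diagonal entry $b_{ii_2\cdots i_m}$ (with $\delta_{ii_2\cdots i_m}=0$) is non-positive, one has $\beta_i(\mathcal{B})=\max\{0,b_{ij_2\cdots j_m}:\delta_{ij_2\cdots j_m}=0\}=0$ for every $i\in N$; combined with the standing hypothesis $b_{i\cdots i}>\beta_i(\mathcal{B})$ this forces $b_{i\cdots i}>0$. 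Hence $b_{i\cdots i}-\beta_i(\mathcal{B})=|b_{i\cdots i}|$, and each summand $\beta_i-b_{ii_2\cdots i_m}$ equals $|b_{ii_2\cdots i_m}|$, so that $\Delta_i(\mathcal{B})=r_i(\mathcal{B})$, $\beta_j(\mathcal{B})-b_{ji\cdots i}=|b_{ji\cdots i}|$, and $\Delta_j^i(\mathcal{B})=r_j^i(\mathcal{B})$.

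For part (I), substituting these identities into Definition \ref{def03.0}, condition (I) of a double $B$-tensor reads $|b_{i\cdots i}|\geq r_i(\mathcal{B})$ for every $i\in N$, and Inequality (\ref{eq03.1}) reads $|b_{i\cdots i}||b_{j\cdots j}|>r_i(\mathcal{B})r_j(\mathcal{B})$ for all distinct $i,j\in N$. Together these two requirements are exactly what Definition \ref{def2.0} demands of a DSDD tensor when $m>2$, while for $m=2$ only the product inequality is required, which is again supplied by (\ref{eq03.1}); the reverse implication is the same substitution read backwards. For part (II), the same substitutions turn the defining inequality (\ref{eq3.3}) of a quasi-double $B$-tensor into
\[ |b_{i\cdots i}|\bigl(|b_{j\cdots j}|-r_j^i(\mathcal{B})\bigr)>r_i(\mathcal{B})\,|b_{ji\cdots i}|, \]
which is precisely the Q-DSDD inequality (\ref{equ3.8}), and again both implications follow from the single substitution.

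The argument is entirely mechanical once the four identifications $\beta_i\equiv 0$, $b_{i\cdots i}-\beta_i\equiv|b_{i\cdots i}|$, $\Delta_i\equiv r_i$ and $\Delta_j^i\equiv r_j^i$ are in place, so there is no real obstacle. The only point to watch is the bookkeeping on the index sets: one must verify that the sum defining $\Delta_j^i(\mathcal{B})$ in Definition \ref{def03.1} ranges over exactly the same multi-indices $(j_2,\ldots,j_m)$ with $\delta_{jj_2\cdots j_m}=0$ and $\delta_{ij_2\cdots j_m}=0$ as the sum defining $r_j^i(\mathcal{B})$ in Definition \ref{def2.1}, which is immediate from the two stated definitions.
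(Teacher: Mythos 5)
Your proof matches the paper's essentially verbatim: the paper also notes that the $Z$-tensor hypothesis forces $\beta_i(\mathcal{B})=0$, $|b_{ii_2\cdots i_m}|=-b_{ii_2\cdots i_m}$ for off-diagonal entries, hence $\Delta_i(\mathcal{B})=r_i(\mathcal{B})$ and $\Delta_j^i(\mathcal{B})=r_j^i(\mathcal{B})$, from which (\ref{eq3.3}) and (\ref{equ3.8}) are equivalent, and dispatches part (I) by the same token. Your write-up is slightly more explicit (spelling out both parts and the positivity of the diagonal), but the route and the mechanism are identical.
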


\begin{proof} We only prove that (II) holds, (I) can be obtained similarly. Since $\mathcal{B}$ be a $Z$-tensor, all of its off-diagonal entries are non-positive. Thus,  we have that for any $i\in N$,  $\beta_i(\mathcal{B})=0$,
\[ |b_{ii_2\cdots i_m}|=-b_{ii_2\cdots i_m}, ~for~all~i_2,\ldots,i_m \in N, \delta_{ii_2\ldots i_m}=0,\]
\[ r_i(\mathcal{B})=\Delta_i(\mathcal{B}) =\sum\limits_{i_2\ldots i_m \in N, \atop \delta_{ii_2\ldots i_m}=0} (\beta_i(\mathcal{B})-b_{ii_2\cdots i_m}),\]
and
\[ r_j^i(\mathcal{B})=\Delta_j^i(\mathcal{B}) =\sum\limits_{\delta_{jj_2\ldots j_m}=0, \atop \delta_{ij_2\ldots j_m}=0} (\beta_j(\mathcal{B})-b_{jj_2\cdots j_m}), ~for~j\neq i.\]
which implies that
Inequality (\ref{eq3.3}) is equivalent to  Inequality (\ref{equ3.8}). The conclusion follows. \end{proof}

In \cite{Li}, Li et al. gave some sufficient conditions for positive
definiteness of tensors.

\begin{lem} \cite[Theorem 11]{Li}\label{lem30.1} Let $\mathcal{A}=(a_{i_1\cdots i_m})$ be an even order real symmetric tensor of order $m$ dimension $n>2$ with $a_{k\cdots k}> 0$ for all $k\in  N$.
If $\mathcal{A}$ satisfies the condition (II) in Definition \ref{def2.0}, then $\mathcal{A}$ is positive definite.
\end{lem}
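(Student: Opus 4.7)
The plan is to prove that every H-eigenvalue of $\mathcal{A}$ is positive, which, since $\mathcal{A}$ is an even-order real symmetric tensor, is equivalent to positive definiteness by the standard Qi characterization. So I assume for contradiction that $\lambda\le 0$ is an H-eigenvalue with a nonzero real H-eigenvector $x$, and derive a contradiction with condition (II) of Definition \ref{def2.0}.

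First pick $p,q\in N$ with $|x_p|\ge |x_q|\ge |x_i|$ for every $i\notin\{p,q\}$. If $|x_q|=0$, then every $x_i$ with $i\neq p$ vanishes, so the $p$-th coordinate of $\mathcal{A}x^{m-1}=\lambda x^{[m-1]}$ collapses to $a_{p\cdots p}x_p^{m-1}=\lambda x_p^{m-1}$, forcing $\lambda=a_{p\cdots p}>0$ and contradicting $\lambda\le 0$. So I may assume $|x_q|>0$.

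Next, in every summand of the $p$-th eigenvalue equation the tuple $(i_2,\ldots,i_m)$ differs from $(p,\ldots,p)$, so at least one index is not $p$ and hence contributes a factor at most $|x_q|$ while all other factors are at most $|x_p|$; this yields
\[
(a_{p\cdots p}-\lambda)\,|x_p|\;\le\; r_p(\mathcal{A})\,|x_q|.
\]
If $r_p(\mathcal{A})=0$ this instantly contradicts $a_{p\cdots p}-\lambda\ge a_{p\cdots p}>0$. Otherwise, using the weak diagonal dominance $a_{p\cdots p}\ge r_p(\mathcal{A})$ from condition (II) together with $\lambda\le 0$, I get $r_p(\mathcal{A})|x_p|\le r_p(\mathcal{A})|x_q|$, forcing $|x_p|=|x_q|$. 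With this equality every $|x_i|$ is bounded by $|x_p|$, so the crude bound applied to both the $p$-th and $q$-th eigenvalue equations produces $a_{p\cdots p}-\lambda\le r_p(\mathcal{A})$ and $a_{q\cdots q}-\lambda\le r_q(\mathcal{A})$. Using $\lambda\le 0$ and weak diagonal dominance once more, these must be equalities $a_{p\cdots p}=r_p(\mathcal{A})$ and $a_{q\cdots q}=r_q(\mathcal{A})$; multiplying then contradicts the strict doubly diagonally dominant inequality (\ref{eq02.1}).

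The main obstacle I foresee is the well-known failure of Brauer's ovals-of-Cassini argument to port directly to tensors of order $m\ge 3$: combining the $p$-th and $q$-th eigenvalue inequalities in one step introduces a spurious factor $(|x_p|/|x_q|)^{m-2}\ge 1$ that the DSDD inequality alone cannot absorb. The remedy built into condition (II) for $m>2$ is precisely the extra hypothesis $|a_{i\cdots i}|\ge r_i(\mathcal{A})$; it is applied first on the $p$-th equation to collapse $|x_p|$ and $|x_q|$ to a common value, after which the exponent issue disappears and the strict DSDD inequality delivers the contradiction cleanly.
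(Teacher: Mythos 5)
The paper does not prove this lemma; it simply cites Theorem~11 of~\cite{Li}, so there is no in-paper argument to compare against. Your self-contained proof is correct, and it is essentially the argument used in the cited source: take an H-eigenpair $(\lambda,x)$ with $\lambda\le 0$, single out the two indices $p,q$ carrying the largest and second-largest values of $|x_i|$, and trap $\lambda$ using those two rows of the eigenvalue equation (the Brauer ovals-of-Cassini mechanism transported to tensors). You also correctly locate where the extra $m>2$ hypothesis $|a_{i\cdots i}|\ge r_i(\mathcal{A})$ enters: it is what forces $|x_p|=|x_q|$ from the $p$-th row alone, eliminating the spurious factor $(|x_p|/|x_q|)^{m-2}$ that would otherwise appear when the $p$-th and $q$-th rows are combined and that the strict product inequality~(\ref{eq02.1}) by itself cannot absorb. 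The side cases $|x_q|=0$ and $r_p(\mathcal{A})=0$ are disposed of correctly, and the final step (weak dominance plus $\lambda\le 0$ forcing $a_{p\cdots p}=r_p(\mathcal{A})$, $a_{q\cdots q}=r_q(\mathcal{A})$, which contradicts~(\ref{eq02.1})) is sound.
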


\begin{lem}\cite[Theorem 13]{Li} \label{lem30.2} Let $\mathcal{A}=(a_{i_1\cdots i_m})$ be an even order real symmetric tensor of order $m$ dimension $n>2$ with $a_{k\cdots k}> 0$ for all $k\in  N$.
If there is an index $i\in N$ such that for all $j\in N $, $j\neq i$, such that Inequality (\ref{equ3.8}) holds and $|a_{i\cdots i}|\geq r_i(\mathcal{A})$, then $\mathcal{A}$ is positive definite.
\end{lem}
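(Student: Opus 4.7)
\emph{Proof plan.} I would proceed by ruling out non-positive H-eigenvalues, since for an even order real symmetric tensor $\mathcal{A}$ is positive definite if and only if every H-eigenvalue of $\mathcal{A}$ is positive (Qi). So assume for contradiction that $\mathcal{A}x^{m-1}=\lambda x^{[m-1]}$ with $\lambda\le 0$ and $x\ne 0$, and pick an index of maximum modulus $t\in N$, $|x_t|=\max_{k\in N}|x_k|>0$. The argument splits according to whether $t=i$ or $t\ne i$, where $i$ is the distinguished index in the hypothesis.

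In the Brauer-type case $t=j\ne i$, I extract from the $i$-th and $j$-th rows of the eigenvalue equation the estimates $(a_{i\cdots i}-\lambda)|x_i|^{m-1}\le r_i(\mathcal{A})|x_j|^{m-1}$ and $(a_{j\cdots j}-\lambda-r_j^i(\mathcal{A}))|x_j|^{m-1}\le|a_{ji\cdots i}||x_i|^{m-1}$. The second is obtained by keeping the ``pure-$i$'' term $a_{ji\cdots i}x_i^{m-1}$ separate in the $j$-th row and bounding the remaining factors $|x_{i_2}|\cdots|x_{i_m}|$ by $|x_j|^{m-1}$. Hypothesis (\ref{equ3.8}) forces $a_{j\cdots j}-r_j^i(\mathcal{A})>0$, so the second estimate together with $|x_j|>0$ either yields $|x_i|>0$ (when $a_{ji\cdots i}\ne 0$) or an immediate contradiction $0<0$ (when $a_{ji\cdots i}=0$). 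In the former subcase I multiply the two bounds, cancel $|x_i|^{m-1}|x_j|^{m-1}$, and use $-\lambda\ge 0$ to replace each positive factor $a-\lambda$ by the smaller $a$, producing $a_{i\cdots i}(a_{j\cdots j}-r_j^i(\mathcal{A}))\le r_i(\mathcal{A})|a_{ji\cdots i}|$, in direct conflict with (\ref{equ3.8}).

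The remaining case $t=i$ is where the proof gets delicate and is the main obstacle. The single-row Gershgorin bound now only gives $\lambda\ge a_{i\cdots i}-r_i(\mathcal{A})\ge 0$, and combined with $\lambda\le 0$ it leaves the borderline possibility $\lambda=0$ and $a_{i\cdots i}=r_i(\mathcal{A})$. To close this loophole I would exploit the equality clause of the Gershgorin inequality: equality of $a_{i\cdots i}|x_i|^{m-1}=\sum_{\delta_{ii_2\ldots i_m}=0}|a_{ii_2\cdots i_m}||x_{i_2}|\cdots|x_{i_m}|$ together with $|x_\ell|\le|x_i|$ forces $|x_{i_l}|=|x_i|$ for every index $i_l$ appearing in some off-diagonal tuple with $a_{ii_2\cdots i_m}\ne 0$. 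Since $a_{i\cdots i}=r_i(\mathcal{A})>0$ such an index $k\ne i$ with $|x_k|=|x_i|=\max_\ell|x_\ell|$ must exist. The Gershgorin bound applied to the $k$-th row then yields $a_{k\cdots k}\le r_k(\mathcal{A})$, whereas specialising (\ref{equ3.8}) to $j=k$ and dividing through by $a_{i\cdots i}=r_i(\mathcal{A})>0$ gives $a_{k\cdots k}-r_k^i(\mathcal{A})>|a_{ki\cdots i}|$, i.e.\ $a_{k\cdots k}>r_k(\mathcal{A})$, the desired contradiction. The strictness of (\ref{equ3.8}) is precisely what makes the argument survive the boundary equality $a_{i\cdots i}=r_i(\mathcal{A})$, which is why the Q-DSDD inequality is insisted on being strict even though the diagonal-dominance condition $|a_{i\cdots i}|\ge r_i(\mathcal{A})$ is only required to be weak.
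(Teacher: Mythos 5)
The paper does not prove this lemma; it is imported verbatim as \cite[Theorem~13]{Li}, so there is no in-paper argument to compare yours against. Your proof is, however, a complete and correct self-contained argument, and it follows the route one would expect the cited source to take: reduce positive definiteness to positivity of all H-eigenvalues, take an H-eigenpair $(\lambda,x)$ with $\lambda\le 0$, localize via the maximal-modulus entry $x_t$, and derive Gershgorin- and Brauer-type row estimates that contradict~(\ref{equ3.8}).

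Two points worth noting. First, the case split and the multiplication/cancellation step in the $t\ne i$ branch is sound; the monotone replacement $a-\lambda\ge a$ (using $\lambda\le 0$ and that both factors are already known to be positive) is exactly what is needed to pass from the eigenvalue bound to the hypothesis~(\ref{equ3.8}). Second, your treatment of the delicate $t=i$ case is the right one: the single-row bound together with $a_{i\cdots i}\ge r_i(\mathcal{A})$ pins down $\lambda=0$ and $a_{i\cdots i}=r_i(\mathcal{A})$; the equality-propagation argument then yields a maximal index $k\ne i$, for which the row-$k$ bound gives $a_{k\cdots k}\le r_k(\mathcal{A})$ while (\ref{equ3.8}) at $j=k$ (divided by $a_{i\cdots i}=r_i(\mathcal{A})>0$ and using $r_k^i(\mathcal{A})=r_k(\mathcal{A})-|a_{ki\cdots i}|$) forces $a_{k\cdots k}>r_k(\mathcal{A})$. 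Your observation that the strictness of~(\ref{equ3.8}) is precisely what rescues the proof on the boundary $a_{i\cdots i}=r_i(\mathcal{A})$ is apt. The only cosmetic remark: the hypothesis $n>2$ is not actually used in your argument, and your proof in fact covers $n\ge 2$; this is harmless since it proves a slightly more general statement than the one cited.
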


By Lemmas \ref{lem30.1} and \ref{lem30.2}, we can easily obtain the following result.

\begin{thm} \label{imp}
An even order real symmetric (Q-)DSDD tensor is positive definite.
\end{thm}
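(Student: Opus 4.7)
The strategy is to read off the theorem from Lemmas \ref{lem30.1} and \ref{lem30.2}, treating the DSDD and Q-DSDD cases separately. Both cited lemmas require $a_{k\cdots k}>0$; this hypothesis is not explicitly written into Definitions \ref{def2.0} and \ref{def2.1}, but it is necessary for the conclusion (since $\mathcal{A}e_i^{m}=a_{i\cdots i}$ for even $m$), so I take it as a standing assumption.

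For the DSDD case with $m>2$, Definition \ref{def2.0}(II) supplies exactly the hypotheses of Lemma \ref{lem30.1}: the weak diagonal dominance $|a_{i\cdots i}|\geq r_i(\mathcal{A})$ and the Brauer-type product condition (\ref{eq02.1}) at every $i\neq j$. Lemma \ref{lem30.1} therefore applies directly. The matrix case $m=2$ follows from the classical result that a symmetric doubly strictly diagonally dominant matrix with positive diagonal is positive definite.

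For the Q-DSDD case, a short bridge to Lemma \ref{lem30.2} is needed: that lemma requires an index $i\in N$ for which (\ref{equ3.8}) holds at every $j\neq i$ \emph{and} $|a_{i\cdots i}|\geq r_i(\mathcal{A})$, whereas Definition \ref{def2.1} gives (\ref{equ3.8}) at every pair but says nothing about diagonal dominance. I would produce such an $i$ by contradiction. Suppose $|a_{i\cdots i}|<r_i(\mathcal{A})$ for every $i\in N$. Using $r_j^i(\mathcal{A})=r_j(\mathcal{A})-|a_{ji\cdots i}|$, one rewrites $|a_{j\cdots j}|-r_j^i(\mathcal{A})=|a_{j\cdots j}|-r_j(\mathcal{A})+|a_{ji\cdots i}|$, which under the standing assumption is strictly smaller than $|a_{ji\cdots i}|$. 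Plugging this into (\ref{equ3.8}) yields, in the generic sub-case $|a_{ji\cdots i}|>0$, the inequality $r_i(\mathcal{A})|a_{ji\cdots i}|<|a_{i\cdots i}||a_{ji\cdots i}|$, and hence $|a_{i\cdots i}|>r_i(\mathcal{A})$ -- a contradiction. In the degenerate sub-case $|a_{ji\cdots i}|=0$, inequality (\ref{equ3.8}) collapses to $|a_{i\cdots i}|(|a_{j\cdots j}|-r_j(\mathcal{A}))>0$, which (since $|a_{i\cdots i}|>0$) forces $|a_{j\cdots j}|>r_j(\mathcal{A})$, again a contradiction. Such an $i$ therefore exists, and Lemma \ref{lem30.2} completes the proof. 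The only real obstacle is this contradiction step, and the care it demands is simply to keep the two sub-cases separate so that the strict inequality in (\ref{equ3.8}) is not applied in a trivial way.
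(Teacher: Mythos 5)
Your proof follows the same route as the paper's---cite Lemmas \ref{lem30.1} and \ref{lem30.2}---but the paper offers no argument beyond asserting that the result follows ``easily'' from those lemmas, whereas you identify and close the one genuine gap. Namely, Definition \ref{def2.1} gives (\ref{equ3.8}) for every pair $(i,j)$ but says nothing about any index satisfying $|a_{i\cdots i}|\geq r_i(\mathcal{A})$, yet Lemma \ref{lem30.2} needs exactly that for some $i$. Your contradiction argument supplying such an $i$ is correct and is precisely the bridge the paper leaves unstated; the only point to tighten is that in both sub-cases you tacitly take $|a_{i\cdots i}|>0$, which is not among the hypotheses, though it is easy to rescue: if $|a_{i\cdots i}|=0$ then the left side of (\ref{equ3.8}) vanishes while the right side is nonnegative (and strictly positive when $|a_{ji\cdots i}|>0$, since $r_i(\mathcal{A})>|a_{i\cdots i}|=0$ under your hypothesis), so (\ref{equ3.8}) already fails and the contradiction is reached anyway. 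You are also right to flag the implicit positive-diagonal hypothesis; it is needed by the lemmas and forced in any case by evaluating $\mathcal{A}x^m$ at the coordinate vectors. One detail neither you nor the paper addresses: Lemmas \ref{lem30.1} and \ref{lem30.2} are stated only for dimension $n>2$, so the case $n=2$, $m\geq 4$ is not formally covered by either treatment; your separate handling of $m=2$ via the classical DSDD-matrix result is a different restriction and does not fill that hole.
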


Now according to Theorem \ref{imp}, we research the positive definiteness of symmetric (quasi-)double $B$-tensors. Before that we give the definition of partially all one tensors, which proposed by Qi and Song \cite{Qi7}. Suppose that $\mathcal{A}$ is a symmetric tensor of order $m$ dimension $n$, and has a principal sub-tensor $\mathcal{A}_r^J$ with $J\in N$ and $|J|=r (1\leq r \leq n)$ such that all the entries of $\mathcal{A}_r^J$are one, and all the other entries of $\mathcal{A}$ are zero, then $\mathcal{A}$ is called a partially all one tensor, and denoted by $\varepsilon^{J}$. If $J=N$, then we denote $\varepsilon^{J}$ simply by $\varepsilon$ and call it an all one tensor.  And an even order partially all one tensor is positive semi-definite; for details, see \cite{Qi7}.

\begin{thm} \label{th3.2}
Let $\mathcal{B}=(b_{i_1\cdots i_m})$ be a symmetric quasi-double $B$-tensor of order $m$ dimension $n$. Then either $\mathcal{B}$ is a Q-DSDD symmetric $Z$-tensor itself, or we have
\begin{equation}\label{eq300.4}
\mathcal{B}=\mathcal{M}+ \sum\limits_{k=1}^{s}h_k\varepsilon^{\hat{J}_k},
\end {equation}
where $\mathcal{M}$ is a Q-DSDD symmetric $Z$-tensor, $s$ is a positive integer, $h_k >0$ and $\hat{J}_k\subseteq N$, for $k=1,2,\cdots,s$. Furthermore, If $m$ is even, then $\mathcal{B}$ is
 positive definite, consequently, $\mathcal{B}$ is a $P$-tensor.
\end{thm}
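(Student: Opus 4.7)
The plan is to mirror the Song--Qi proof of Theorem \ref{th1.3} \cite{So1}: construct an explicit decomposition $\mathcal{B}=\mathcal{M}+\sum_{k=1}^{s} h_k\varepsilon^{\hat{J}_k}$ in which $\mathcal{M}$ is a symmetric $Z$-tensor and each $h_k > 0$, transfer the defining inequality of a quasi-double $B$-tensor on $\mathcal{B}$ into the Q-DSDD inequality on $\mathcal{M}$ (so that Proposition \ref{pro3.3}(II) identifies $\mathcal{M}$ as a quasi-double $B$-tensor that is a $Z$-tensor), and finally combine Theorem \ref{imp} with the positive semi-definiteness of even order partially all one tensors to obtain positive definiteness of $\mathcal{B}$.

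First I would dispose of the trivial case. If every off-diagonal entry of $\mathcal{B}$ is non-positive, then $\mathcal{B}$ itself is a symmetric $Z$-tensor, and Proposition \ref{pro3.3}(II) already gives that $\mathcal{B}$ is Q-DSDD; no decomposition is needed. Otherwise, since $\mathcal{B}$ is symmetric the positive off-diagonal positions are stable under index permutation, and I would iteratively pick a subset $\hat{J}_k \subseteq N$ that supports a currently positive off-diagonal entry together with a coefficient $h_k > 0$, and replace $\mathcal{B}$ by $\mathcal{B}-h_k\varepsilon^{\hat{J}_k}$. Because each $\varepsilon^{\hat{J}_k}$ is symmetric, symmetry is preserved throughout; choosing $h_k$ and $\hat{J}_k$ as in the proof of \cite[Theorem 4]{So1}, all off-diagonal entries become non-positive after finitely many steps, producing the required $\mathcal{M}$ together with positive integer $s$ and positive coefficients $h_1,\ldots,h_s$.

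The core analytic step is then to verify that $\mathcal{M}$ is Q-DSDD. The decomposition is designed so that after computing $m_{i\cdots i}$, $r_i(\mathcal{M})$, $|m_{ji\cdots i}|$ and $r_j^i(\mathcal{M})$ in terms of the data of $\mathcal{B}$, one obtains the matching identifications
\[ |m_{i\cdots i}| \leftrightarrow b_{i\cdots i}-\beta_i(\mathcal{B}),\quad r_i(\mathcal{M}) \leftrightarrow \Delta_i(\mathcal{B}), \]
\[ |m_{ji\cdots i}| \leftrightarrow \beta_j(\mathcal{B})-b_{ji\cdots i},\quad r_j^i(\mathcal{M}) \leftrightarrow \Delta_j^i(\mathcal{B}), \]
so that the quantities appearing in (\ref{eq3.3}) are exactly those appearing in (\ref{equ3.8}). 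Once this identification is in place, the quasi-double $B$-tensor inequality for $\mathcal{B}$ is literally the Q-DSDD inequality for $\mathcal{M}$, and Proposition \ref{pro3.3}(II) completes the certification of $\mathcal{M}$ as a symmetric Q-DSDD $Z$-tensor.

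For the positive definiteness conclusion when $m$ is even, Theorem \ref{imp} gives that $\mathcal{M}$ is positive definite, and the result of \cite{Qi7} that every even order partially all one tensor is positive semi-definite, combined with $h_k>0$, yields for any nonzero $x\in\mathbb{R}^n$,
\[ \mathcal{B}x^m = \mathcal{M}x^m + \sum_{k=1}^{s}h_k\,\varepsilon^{\hat{J}_k}x^m > 0, \]
so $\mathcal{B}$ is positive definite; the $P$-tensor statement follows from the Song--Qi equivalence for symmetric tensors \cite{So1}. The main obstacle will be the bookkeeping in the decomposition step: one must show that the coefficients $h_k$ and sets $\hat{J}_k$ can be chosen so that the identifications in the previous paragraph hold simultaneously for every index $i$ and every pair $(i,j)$. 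This is a combinatorial requirement about how the positive part of a symmetric tensor can be split into positive combinations of partially all one tensors, and it is precisely the place where the symmetry of $\mathcal{B}$ is indispensable.
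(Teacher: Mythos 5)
Your high-level plan (peel off positive multiples of partially all one tensors until a symmetric $Z$-tensor $\mathcal{M}$ remains, certify $\mathcal{M}$ as Q-DSDD, then combine Theorem \ref{imp} with the positive semi-definiteness of even order partially all one tensors) matches the paper's, and the construction you invoke from \cite{So1} is exactly the one the paper uses ($h_k=\min_{i\in\hat{J}_k}d_i^{(k)}$, subtract $h_k\varepsilon^{\hat{J}_k}$, repeat). But your core analytic step has a genuine gap: the ``matching identifications'' you state are false as equalities. The identity $m_{i\cdots i}=b_{i\cdots i}-\beta_i(\mathcal{B})$ does hold, because the total amount subtracted from the diagonal entry in row $i$ is exactly $\beta_i(\mathcal{B})$. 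However $r_i(\mathcal{M})\neq\Delta_i(\mathcal{B})$ in general: the amount subtracted from an off-diagonal position $(i,i_2,\ldots,i_m)$ is the sum of $h_k$ over only those steps $k$ with $\{i,i_2,\ldots,i_m\}\subseteq\hat{J}_k$, and this is usually strictly less than $\beta_i(\mathcal{B})$ for positions involving indices that drop out of $\hat{J}_k$ early. Concretely, already for $m=2$, $n=3$ with $b_{12}=5$, $b_{13}=-1$, $b_{23}=2$ (and large positive diagonals) the Song--Qi peeling gives $h_1=2$ on $\{1,2,3\}$ and $h_2=3$ on $\{1,2\}$, so $m_{13}=-3$ and $r_1(\mathcal{M})=3$, whereas $\Delta_1(\mathcal{B})=6$. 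So the quasi-double $B$ inequality for $\mathcal{B}$ is \emph{not} ``literally'' the Q-DSDD inequality for $\mathcal{M}$, and the argument as written does not close.

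What actually saves the strategy --- and this is what the paper does instead --- is to verify that each $\mathcal{B}_{k+1}=\mathcal{B}_k-h_k\varepsilon^{\hat{J}_k}$ is \emph{again} a symmetric quasi-double $B$-tensor. The key facts used are (\ref{eq3.5})--(\ref{eq3.6}): for $i\in\hat{J}_k$ both $b_{i\cdots i}$ and $\beta_i$ drop by $h_k$, so $b_{i\cdots i}-\beta_i$ is unchanged, and each term $\beta_i(\mathcal{B}_{k+1})-b^{(k+1)}_{ii_2\cdots i_m}$ can only decrease. This gives monotonicity of the relevant quantities rather than equality, and at the end $\mathcal{M}=\mathcal{B}_{s+1}$ is a quasi-double $B$-tensor that is also a $Z$-tensor, so Proposition \ref{pro3.3}(II) gives that $\mathcal{M}$ is Q-DSDD. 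Your route can be rescued by replacing your equalities with the correct one-sided inequalities $r_i(\mathcal{M})\le\Delta_i(\mathcal{B})$, $|m_{ji\cdots i}|\le\beta_j(\mathcal{B})-b_{ji\cdots i}$, and $|m_{j\cdots j}|-r_j^i(\mathcal{M})\ge b_{j\cdots j}-\beta_j(\mathcal{B})-\Delta_j^i(\mathcal{B})$, together with a sign check that the left-hand factor $b_{i\cdots i}-\beta_i(\mathcal{B})$ is positive (which is part of Definition \ref{def03.1}) --- but as stated your proof does not establish this and presents equalities that do not hold.
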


\begin{proof} Let
$\hat{J}(\mathcal{B}) =\{i\in N:$~there~is~at~least~one positive~off-diagonal~entry~in~the \emph{i}th~row~of $\mathcal{B}\}.$ Obviously, $\hat{J}(\mathcal{B}) \subseteq N$. If $\hat{J}(\mathcal{B})=\emptyset$
, then $\mathcal{B}$ is a $Z$-tensor. The conclusion follows in the case.

Now we suppose that $\hat{J}(\mathcal{B})\neq \emptyset$, let $\mathcal{B}_1=\mathcal{B}=(b_{i_1\cdots i_m}^{(1)})$, and let  $d_i^{(1)}$ be be the value of the largest off-diagonal entry in the $i$th row of $\mathcal{B}_1$, that is, \[d_i^{(1)}=\max\limits_{i_2\ldots i_m \in N, \atop \delta_{ii_2\ldots i_m}=0} b_{ii_2\cdots i_m}^{(1)}.\]
Furthermore, let $\hat{J}_1 =\hat{J}(\mathcal{B}_1)$,
$h_1=\min\limits_{i\in \hat{J}_1} d_i^{(1)}$
and \[J_1=\{i\in \hat{J}_1:d_i^{(1)}=h_1\}.\]
Then $J_1 \subseteq \hat{J}_1$ and $h_1>0$.\

Consider $\mathcal{B}_2=\mathcal{B}_1-h_1 \varepsilon^{\hat{J}_1}=(b_{i_1\cdots i_m}^{(2)})$. Obviously, $\mathcal{B}_2$ is also symmetric by the definition of $\varepsilon^{\hat{J}_1}$. Note that
\begin{equation}\label{eq3.5}
b_{i_1\cdots i_m}^{(2)}=\left\{\begin{array}{cccc}
   b_{i_1\cdots i_m}^{(1)}-h_1,   &i_1,i_2,\ldots,i_m\in \hat{J}_1   \\
   b_{i_1\cdots i_m}^{(1)},   &otherwise,
\end{array}
\right.\end{equation}
for $i\in J_1$,
\begin{equation}\label{eq3.7}\beta_i(\mathcal{B}_2)=\beta_i(\mathcal{B}_1)-h_1=0,
\end{equation}
and that for $i\in \hat{J}_1\backslash J_1$,
\begin{equation}\label{eq3.6}\beta_i(\mathcal{B}_2)=\beta_i(\mathcal{B}_1)-h_1> 0.
\end{equation}
Combining (\ref{eq3.5}), (\ref{eq3.7}), (\ref{eq3.6}) with the fact that for each $j\notin \hat{J}_1$,
$\beta_i(\mathcal{B}_2)=\beta_i(\mathcal{B}_1)$, we easily obtain by Definition \ref{def03.1} that $\mathcal{B}_2$ is still a symmetric quasi-double $B$-tensor.

Now replace $\mathcal{B}_1$ by $\mathcal{B}_2$, and repeat this
process. Let $\hat{J}(\mathcal{B}_2) =\{i\in
N:$~there~is~at~least~one positive~off-diagonal~entry~in~the
\emph{i}th~row~of $\mathcal{B}_2\}$. Then
$\hat{J}(\mathcal{B}_2)=\hat{J}_1\backslash J_1$. Repeat this
process until $\hat{J}(\mathcal{B}_{s+1}) = \emptyset$. Let
$\mathcal{M}=\mathcal{B}_{s+1}$. Then  (\ref{eq300.4}) holds.

Furthermore, if $m$ is even, then $\mathcal{B}$ a symmetric quasi-double $B$-tensor of even order.  If $\mathcal{B}$ itself is a Q-DSDD symmetric $Z$-tensor, then it is positive definite by Lemma \ref{lem30.2}. Otherwise, (\ref{eq300.4}) holds with $s>0$. Let $x\in \mathbb{R}^{n}$. Then by (\ref{eq300.4}) and that fact that $\mathcal{M}$ is positive definite, we have
\[\mathcal{B}x^{m}=\mathcal{M}x^{m}+ \sum\limits_{k=1}^{s}h_k\varepsilon^{\hat{J}_k}x^{m}
= \mathcal{M}x^{m}+ \sum\limits_{k=1}^{s}h_k||x_{\hat{J}_k}||_m^{m}\geq  \mathcal{M}x^{m}> 0.\] This implies that $\mathcal{B}$ is positive definite. Note that a symmetric tensor is a $P$-tensor if and only it is positive definite \cite{So1}, therefore $\mathcal{B} $ is a $P$-tensor. The proof is completed. \end{proof}

Similar to the proof of Theorem \ref {th3.2}, by Lemma \ref{lem30.1} we easily have that an even order symmetric double $B$-tensor is  positive definite and a $P$-tensor.

\begin{thm} \label{th3.3}
Let $\mathcal{B}=(b_{i_1\cdots i_m})$ be a symmetric double $B$-tensor of order $m$ dimension $n$. Then either $\mathcal{B}$ is a DSDD symmetric $Z$-tensor itself, or we have
\begin{equation}\label{eq3.4}
\mathcal{B}=\mathcal{M}+ \sum\limits_{k=1}^{s}h_k\varepsilon^{\hat{J}_k},
\end {equation}
where $\mathcal{M}$ is a DSDD symmetric $Z$-tensor, $s$ is a positive integer, $h_k >0$ and $\hat{J}_k\subseteq N$, for $k=1,2,\cdots,s$. Furthermore, If $m$ is even, then $\mathcal{B}$ is
 positive definite, consequently, $\mathcal{B}$ is a $P$-tensor.
\end{thm}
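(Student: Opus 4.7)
The plan is to mimic, step-by-step, the constructive subtraction argument used in the proof of Theorem \ref{th3.2}, with $\varepsilon^{\hat{J}_k}$ peeled off one at a time, and then to invoke Lemma \ref{lem30.1} in place of Lemma \ref{lem30.2} at the end. Concretely, I would let $\hat{J}(\mathcal{B})$ denote the set of row indices carrying at least one positive off-diagonal entry. If $\hat{J}(\mathcal{B})=\emptyset$, then $\mathcal{B}$ is already a symmetric $Z$-tensor, and Proposition \ref{pro3.3}(I) says it is a DSDD tensor, so we are done. Otherwise I would set $\mathcal{B}_1=\mathcal{B}$, define $d_i^{(1)}$, $h_1$, $J_1$, and $\hat{J}_1$ exactly as in the proof of Theorem \ref{th3.2}, and consider $\mathcal{B}_2=\mathcal{B}_1-h_1\varepsilon^{\hat{J}_1}$.

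The central verification — and the step I expect to be the main obstacle — is showing that each $\mathcal{B}_{k+1}$ is still a (symmetric) double $B$-tensor, not merely a quasi-double $B$-tensor. Symmetry is immediate from symmetry of $\varepsilon^{\hat{J}_k}$. For the two conditions of Definition \ref{def03.0}, I would reuse the identities (\ref{eq3.5}), (\ref{eq3.7}), (\ref{eq3.6}): subtracting $h_1$ from every entry $b^{(1)}_{i_1\cdots i_m}$ with $i_1,\ldots,i_m\in \hat{J}_1$ decreases $\beta_i$ by exactly $h_1$ for $i\in \hat{J}_1$ and leaves $\beta_i$ unchanged for $i\notin \hat{J}_1$, so the left-hand side $b_{i\cdots i}-\beta_i$ is invariant across indices in $\hat{J}_1$ and in $N\setminus \hat{J}_1$, while $\Delta_i$ is also invariant (each summand $\beta_i-b_{ii_2\cdots i_m}$ is unchanged). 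Hence condition (I) is preserved. For condition (II), the same invariance of $b_{i\cdots i}-\beta_i(\mathcal{B})$ and of $\Delta_i(\mathcal{B})$ shows that Inequality (\ref{eq03.1}) is carried verbatim from $\mathcal{B}_1$ to $\mathcal{B}_2$.

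Next, since $\hat{J}(\mathcal{B}_2)=\hat{J}_1\setminus J_1 \subsetneq \hat{J}_1$, iterating this subtraction must terminate in finitely many steps, say after $s$ rounds, with $\hat{J}(\mathcal{B}_{s+1})=\emptyset$. Setting $\mathcal{M}=\mathcal{B}_{s+1}$ gives a symmetric $Z$-tensor which is still a double $B$-tensor, and Proposition \ref{pro3.3}(I) then identifies $\mathcal{M}$ as a DSDD symmetric $Z$-tensor, which is exactly the decomposition (\ref{eq3.4}).

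For the positive-definiteness conclusion when $m$ is even, I would split into the two cases of the dichotomy. If $\mathcal{B}$ itself is a DSDD symmetric $Z$-tensor (and hence, since it is a double $B$-tensor, satisfies $b_{i\cdots i}>\beta_i(\mathcal{B})\geq 0$), Lemma \ref{lem30.1} gives $\mathcal{B}x^{m}>0$ for every nonzero $x$. Otherwise, writing $\mathcal{B}=\mathcal{M}+\sum_{k=1}^{s}h_k\varepsilon^{\hat{J}_k}$ with $\mathcal{M}$ positive definite by Lemma \ref{lem30.1}, and using the identity
\[
\varepsilon^{\hat{J}_k} x^{m} = \|x_{\hat{J}_k}\|_m^{m}\geq 0
\]
which is valid because $m$ is even, I obtain
\[
\mathcal{B}x^{m}=\mathcal{M}x^{m}+\sum_{k=1}^{s}h_k\|x_{\hat{J}_k}\|_m^{m}\geq \mathcal{M}x^{m}>0
\]
for every nonzero $x\in\mathbb{R}^{n}$. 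Finally, since $\mathcal{B}$ is symmetric, the equivalence between positive definiteness and the $P$-tensor property \cite{So1} closes the proof.
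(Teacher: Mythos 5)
Your proposal follows the paper's own approach essentially verbatim: the paper dispatches Theorem~\ref{th3.3} with the remark that one argues ``similar to the proof of Theorem~\ref{th3.2}, by Lemma~\ref{lem30.1},'' and you carry out exactly that plan — same peeling construction, Proposition~\ref{pro3.3}(I) in place of (II), Lemma~\ref{lem30.1} in place of Lemma~\ref{lem30.2}. The overall structure and the final positive-definiteness calculation are correct.

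One small inaccuracy is worth flagging. You assert that $\Delta_i$ is \emph{invariant} under the subtraction $\mathcal{B}_1\mapsto\mathcal{B}_2=\mathcal{B}_1-h_1\varepsilon^{\hat{J}_1}$ because ``each summand $\beta_i-b_{ii_2\cdots i_m}$ is unchanged.'' That is not quite right when $\hat{J}_1\subsetneq N$: for $i\in\hat{J}_1$ and a tuple $(i_2,\ldots,i_m)$ with at least one index outside $\hat{J}_1$, the entry $b_{ii_2\cdots i_m}$ is untouched while $\beta_i$ drops by $h_1$, so that summand \emph{decreases} by $h_1$ (it remains nonnegative, since $\beta_i(\mathcal{B}_2)\ge b^{(2)}_{ii_2\cdots i_m}$ by definition of $\beta_i$). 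Thus $\Delta_i$ is non-increasing rather than invariant. Fortunately this only strengthens what you need — $b_{i\cdots i}-\beta_i$ is exactly preserved while $\Delta_i$ can only go down, so both conditions of Definition~\ref{def03.0} are preserved — but the justification you give for the central verification is not the correct one, and it is precisely the step you yourself identified as ``the main obstacle.'' Replacing ``invariant'' by ``non-increasing, with each summand staying nonnegative'' would make the argument airtight.
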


Since an even order real symmetric tensor is positive definite if and only if all of its H-eigenvalues are positive \cite{Qi},  by Theorems \ref{th3.2} and \ref{th3.3}  we have the following results.

\begin{corollary}
All the H-eigenvalues of an even order symmetric double $B$-tensor are positive.
\end{corollary}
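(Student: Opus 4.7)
The plan is very short because nearly all the work has already been done in the main theorems. I would simply chain two previously established facts: Theorem \ref{th3.3} gives that an even order symmetric double $B$-tensor $\mathcal{B}$ is positive definite, and the characterization from \cite{Qi} (explicitly recalled in the paragraph just before the corollary) says that an even order real symmetric tensor is positive definite if and only if all of its H-eigenvalues are positive.

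Concretely, first I would fix an even order symmetric double $B$-tensor $\mathcal{B}=(b_{i_1\cdots i_m})$ and invoke Theorem \ref{th3.3} to conclude that $\mathcal{B}$ is positive definite. Then I would apply the H-eigenvalue/positive definiteness equivalence for even order symmetric tensors to deduce that every H-eigenvalue of $\mathcal{B}$ is positive. (The analogous statement for even order symmetric quasi-double $B$-tensors follows from Theorem \ref{th3.2} in the same way, and could be recorded as a parallel corollary if desired.)

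There is essentially no obstacle here: the content of the corollary is encoded in the two preceding results, so the proof is a one-line citation argument. The only thing to be careful about is that the equivalence ``positive definite $\Leftrightarrow$ all H-eigenvalues positive'' genuinely requires both evenness of $m$ and symmetry of $\mathcal{B}$, both of which are assumed in the statement, so no further hypotheses need to be added.
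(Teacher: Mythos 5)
Your proposal is correct and matches the paper's own argument exactly: the paper likewise derives this corollary by combining Theorem \ref{th3.3} (positive definiteness of even order symmetric double $B$-tensors) with the standard equivalence from \cite{Qi} that an even order real symmetric tensor is positive definite if and only if all its H-eigenvalues are positive. Nothing further is needed.
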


\begin{corollary}
All the H-eigenvalues of an even order symmetric quasi-double $B$-tensor are positive.
\end{corollary}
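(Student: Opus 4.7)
The plan is essentially to assemble two already-available pieces, since the hard analytic work has been absorbed into Theorem \ref{th3.2}. First, I would invoke Theorem \ref{th3.2} to conclude that an even order symmetric quasi-double $B$-tensor $\mathcal{B}$ is positive definite. Second, I would cite the classical characterization of Qi \cite{Qi} that for an even order real symmetric tensor, positive definiteness is equivalent to all of its H-eigenvalues being positive. Chaining the two implications gives the corollary in a single line.

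In more detail, I would begin by noting that the hypothesis of the corollary (symmetric, even order $m$, quasi-double $B$-tensor) matches exactly the hypothesis of Theorem \ref{th3.2}. That theorem produces the decomposition $\mathcal{B}=\mathcal{M}+\sum_{k=1}^{s}h_k\varepsilon^{\hat{J}_k}$ (or the degenerate case where $\mathcal{B}$ is already a Q-DSDD $Z$-tensor), and uses Lemma \ref{lem30.2} together with the positive semi-definiteness of even order partially all one tensors to conclude that $\mathcal{B}x^m>0$ for every nonzero $x\in\mathbb{R}^n$. Thus $\mathcal{B}$ is positive definite in the sense of Definition \ref{def1.2}.

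Next, I would apply the H-eigenvalue criterion: if $\lambda$ is any H-eigenvalue of $\mathcal{B}$ with H-eigenvector $x\neq 0$, then $\mathcal{B}x^{m}=\lambda\sum_{i\in N}x_i^{m}$, where $\sum_{i\in N}x_i^{m}>0$ because $m$ is even and $x\neq 0$. Since $\mathcal{B}x^m>0$, it follows that $\lambda>0$. This is essentially the content of Qi's theorem, and the evenness of $m$ is used precisely here to ensure $\sum_i x_i^{m}>0$. The main ``obstacle'' is really just verifying that both ``even order'' and ``symmetric'' are invoked correctly: evenness to make $\sum_i x_i^m$ positive, and symmetry so that the H-eigenvalue/positive-definiteness equivalence is available. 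Both hypotheses are present in the statement, so no further work is needed.
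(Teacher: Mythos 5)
Your proposal is correct and matches the paper's approach exactly: apply Theorem~\ref{th3.2} to conclude positive definiteness, then invoke Qi's equivalence between positive definiteness of an even order symmetric tensor and positivity of all its H-eigenvalues. The extra sketch of why $\mathcal{B}x^{m}=\lambda\sum_i x_i^{m}>0$ forces $\lambda>0$ is a helpful unpacking of the cited result but not a departure from the paper's argument.
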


\section{Conclusions} In this paper, we give two generalizations of $B$-tensors:
double $B$-tensors and quasi-double $B$-tensors, and prove that an
even order symmetric (quasi-)double $B$-tensor is positive definite.

On the other hand, we could consider the problem that whether an
even order symmetric tensor is  positive semi-definite by weakening
the condition of Definition \ref{def03.1} as follows.

\begin{definition}\label{def4.2}
Let $\mathcal{B}=(b_{i_1\cdots i_m})$ be a tensor of order $m$ dimension $n$.
$\mathcal{B}$ is a quasi-double $B_0$-tensor if and only if for all $i,j\in N$ $i\neq j$,
\begin{equation} \label{eq4.2}
(b_{i\cdots i}-\beta_i(\mathcal{B})) \left (b_{j\cdots j}-\beta_j(\mathcal{B})- \Delta_j^i ({\mathcal{B}}) \right)
\geq \left( \beta_j(\mathcal{B})-b_{ji\cdots i}\right)\Delta_i ({\mathcal{B}}).
\end{equation}
\end{definition}

However, it con't be proved by using the technique in this paper
that an even order symmetric quasi-double $B_0$-tensor is positive
semi-definite. We here only give the following conjecture.

\begin{conjecture} An even order
symmetric quasi-double $B_0$-tensor is positive semi-definite.
\end{conjecture}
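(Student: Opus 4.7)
The plan is to mirror the proof strategy of Theorem~\ref{th3.2} with the strict inequality (\ref{eq3.3}) replaced throughout by the weak inequality (\ref{eq4.2}). Specifically, I would first introduce an analogous class of \emph{weak Q-DSDD} $Z$-tensors, namely tensors satisfying $|a_{i\cdots i}|(|a_{j\cdots j}| - r_j^i(\mathcal{A})) \geq r_i(\mathcal{A})|a_{ji\cdots i}|$ for all $i\neq j$, and aim to show that every even order symmetric weak Q-DSDD $Z$-tensor is positive semi-definite. Granting that intermediate result, I would decompose any symmetric quasi-double $B_0$-tensor $\mathcal{B}$ as $\mathcal{B} = \mathcal{M} + \sum_{k=1}^{s} h_k \varepsilon^{\hat{J}_k}$ with $\mathcal{M}$ a weak Q-DSDD symmetric $Z$-tensor, by iterating the subtraction $\mathcal{B}_{t+1} = \mathcal{B}_t - h_t \varepsilon^{\hat{J}_t}$ that produced (\ref{eq300.4}). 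Since each even order partially all one tensor $\varepsilon^{\hat{J}_k}$ is positive semi-definite and each $h_k>0$, this would force $\mathcal{B}x^m \geq \mathcal{M}x^m \geq 0$ for every $x\in\mathbb{R}^n$, establishing the conjecture.

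For the core lemma --- semi-definiteness of even order symmetric weak Q-DSDD $Z$-tensors --- the natural attempt is a perturbation argument. Set $\mathcal{M}_\epsilon := \mathcal{M} + \epsilon \mathcal{I}$ for $\epsilon>0$; its off-diagonals are unchanged while each diagonal entry grows by $\epsilon$, so if we can verify that $\mathcal{M}_\epsilon$ meets the strict inequality (\ref{equ3.8}), Lemma~\ref{lem30.2} gives positive definiteness of $\mathcal{M}_\epsilon$, and we would conclude via $\mathcal{M}x^m = \lim_{\epsilon\to 0^+}\bigl(\mathcal{M}_\epsilon x^m - \epsilon\sum_i x_i^m\bigr)\geq 0$. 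An alternative route worth trying is a direct sum-of-squares style rearrangement of $\mathcal{M}x^m$ along the pairs $(x_i^{m/2},x_j^{m/2})$, exploiting the pairwise structure of (\ref{equ3.8}) to control indefinite cross terms.

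The hard part, and presumably the reason the authors only conjecture this, is twofold. First, the decomposition step in Theorem~\ref{th3.2} relies essentially on the strict diagonal assumption $b_{i\cdots i}>\beta_i(\mathcal{B})$ built into Definition~\ref{def03.1}; Definition~\ref{def4.2} has been deliberately weakened to allow $b_{i\cdots i}=\beta_i(\mathcal{B})$, so after subtracting $h_1 \varepsilon^{\hat{J}_1}$ one may have $b_{i\cdots i}^{(2)}-\beta_i(\mathcal{B}_2)<0$, which not only violates the standing hypothesis of the next iteration but can also destroy the weak Q-DSDD structure of the residue. Second, the perturbation $\mathcal{M}+\epsilon\mathcal{I}$ only strictly strengthens the left-hand side of (\ref{equ3.8}) when $|a_{j\cdots j}|\geq r_j^i(\mathcal{M})$, a row-wise dominance that the weak pairwise hypothesis does not force on every row; so a naive uniform perturbation may actually weaken the inequality and fail, and one would likely need either a row-dependent perturbation $\mathrm{diag}(\epsilon_1,\ldots,\epsilon_n)$ tailored to the deficient indices, or a continuity argument based directly on H-eigenvalues (using that the smallest H-eigenvalue is a continuous function of the tensor entries) to bridge the gap.
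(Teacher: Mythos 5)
The statement you were asked to prove is stated in the paper as a \emph{conjecture}, not a theorem: the authors explicitly remark immediately before it that ``it can't be proved by using the technique in this paper that an even order symmetric quasi-double $B_0$-tensor is positive semi-definite,'' and they give no proof. So there is no paper proof to compare against, and any valid ``proof proposal'' here would have to be a genuinely new result.

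Your write-up is, to its credit, honest about this: it is a plan plus a diagnosis of why the plan stalls, not a completed argument. Your two identified obstructions are correct and match the paper's implicit concern. (i) The decomposition $\mathcal{B}=\mathcal{M}+\sum_k h_k\varepsilon^{\hat J_k}$ in Theorem~\ref{th3.2} repeatedly subtracts $h_k\varepsilon^{\hat J_k}$ and then reuses the standing hypothesis $b_{i\cdots i}>\beta_i(\mathcal{B})$; with Definition~\ref{def4.2} this hypothesis is dropped, so after one subtraction a diagonal entry can fall below $\beta_i$ and the residue need not remain a quasi-double $B_0$-tensor (nor satisfy the $b_{i\cdots i}\geq\beta_i$ condition one would need to keep iterating). (ii) The perturbation $\mathcal{M}+\epsilon\mathcal{I}$ does not obviously restore the strict inequality~(\ref{equ3.8}): the left side of~(\ref{equ3.8}) is a product, and increasing $|a_{i\cdots i}|$ by $\epsilon$ while the factor $|a_{j\cdots j}|-r_j^i$ is negative makes the product \emph{smaller}, so a uniform shift can move you further from, not closer to, the Q-DSDD class. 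Both observations are sound. But neither the row-dependent perturbation idea nor the H-eigenvalue continuity idea is carried out, so the conjecture remains open in your proposal exactly as it does in the paper. In short: you have not proved the statement, but you have correctly located the same difficulty the authors saw, and you should present this as an analysis of obstructions rather than as a proof.
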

\section*{Acknowledgements}
The authors would like to thank Professor L. Qi for his many
valuable comments and suggestions. 







\end{document}